\documentclass[onefignum,onetabnum]{siamonline250211}

\usepackage{amssymb}
\usepackage{mathrsfs}
\usepackage{algorithmic}
\usepackage{placeins}
\usepackage{needspace}

\DeclareMathOperator{\diag}{diag}

\newsiamremark{remark}{Remark}
\crefname{remark}{remark}{remarks}
\Crefname{remark}{Remark}{Remarks}
\AddToHook{env/remark/begin}{\crefalias{theorem}{remark}}

\usepackage[caption=false]{subfig}

\ifpdf
  \DeclareGraphicsExtensions{.pdf,.png,.jpg,.eps}
\else
  \DeclareGraphicsExtensions{.eps}
\fi

\headers{Directional Confocal NLOS Reconstruction}{L. Qiu, Z. Shi, J. Wang, S. Wu}

\title{Reconstruction and Range Characterization for a Directional Confocal Non-Line-of-Sight Imaging Model\thanks{\funding{This work was supported by the National Natural Science Foundation of China (NSFC) under Grant No. 12471399, 92370125.}}}

\author{Lingyun Qiu\thanks{\raggedright Yau Mathematical Sciences Center, Tsinghua University, Beijing 100084, China, and Yanqi Lake Beijing Institute of Mathematical Sciences and Applications, Beijing 101408, China
  (\email{lyqiu@tsinghua.edu.cn}, \email{zqshi@tsinghua.edu.cn}).}
\and Zuoqiang Shi\footnotemark[2]
\and Jianyu Wang\thanks{\raggedright Yau Mathematical Sciences Center, Tsinghua University, Beijing 100084, China
  (\mbox{\email{jy-wang19@mails.tsinghua.edu.cn}}).\protect\endgraf}
\and Shiwei Wu\thanks{\raggedright Qiuzhen College, Tsinghua University, Beijing 100084, China
  (\email{wsw23@mails.tsinghua.edu.cn}).}}

\ifpdf
\hypersetup{
  pdftitle={Reconstruction and Range Characterization for a Directional Confocal Non-Line-of-Sight Imaging Model},
  pdfauthor={Lingyun Qiu, Zuoqiang Shi, Jianyu Wang, and Shiwei Wu}
}
\fi

\begin{document}

\maketitle

\begin{abstract}
We study the reconstruction of a directional albedo field from confocal non-line-of-sight measurements.
For mirror-symmetric vector fields in Sobolev spaces, radial preprocessing reduces the data to spherical 
means of the divergence with centers on the relay wall. Full data determine this divergence uniquely, with
divergence-free fields forming the entire ambiguity. An explicit
Fourier--sine formula recovers the irrotational Helmholtz component and
reproduces the data. The Fourier--sine transforms of the model data form a weighted Hilbert space that we characterize exactly, with norm equal to the Sobolev norm of the reconstructed field. This weighted range remains well defined even when the preprocessed data fail to belong to standard Sobolev spaces. For Schwartz fields in the model class, such Sobolev regularity holds exactly when the divergence has zero depth integral.
Measurements on an open subset of the relay wall, for all radii
$0<r<R_{\max}$, uniquely determine the divergence in the union
of the corresponding balls. An FFT-based algorithm with Stolt interpolation implements the
reconstruction in $O(N^3\log N)$ operations on an $N^3$ grid.
Tests on explicitly defined fields assess reconstruction accuracy and invariance
under divergence-free perturbations. We present imaging examples of potential and vector-field reconstruction
from model-generated, rendered, and measured transients.
\end{abstract}

\begin{keywords}
non-line-of-sight imaging, directional albedo, spherical mean transform, gauge ambiguity, Fourier inversion, range characterization
\end{keywords}

\begin{MSCcodes}
78A46, 44A12, 35R30, 94A08
\end{MSCcodes}

\section{Introduction}
\label{sec:intro}

Non-line-of-sight (NLOS) imaging reconstructs hidden scenes from light
that reaches the detector along indirect paths. It has potential
applications in robotic vision, autonomous navigation, and remote
sensing \cite{Faccio2020Review}. Time-resolved measurements have enabled
three-dimensional reconstruction of hidden objects
\cite{Velten2012TimeOfFlight} and localization and tracking of moving
objects outside the direct line of sight \cite{Gariepy2016Tracking}.
In the active arrangement shown in \cref{fig:setup}, ultrashort laser
pulses illuminate a visible relay wall, and a single-photon detector
records the returning transient signal
\cite{Kirmani2009TransientImaging,Buttafava2015SPAD}.
The wall thereby acts as a virtual source--detector array.

In confocal acquisition, the virtual source and detector coincide,
so points with the same travel time lie on a sphere centered on the
relay wall. The light-cone transform \cite{OToole2018LCT} and
frequency--wavenumber migration \cite{Lindell2019FK} exploit this
geometry to obtain efficient reconstruction algorithms.
Other computational approaches include fast backprojection
\cite{Arellano2017FastBackProjection} and phasor-field methods
\cite{Liu2019PhasorField,Liu2020PhasorDiffraction}.

\begin{figure}[t!]
  \centering
  \subfloat[Acquisition setup]{
    \label{fig:setup}
    \includegraphics[width=0.4\textwidth]{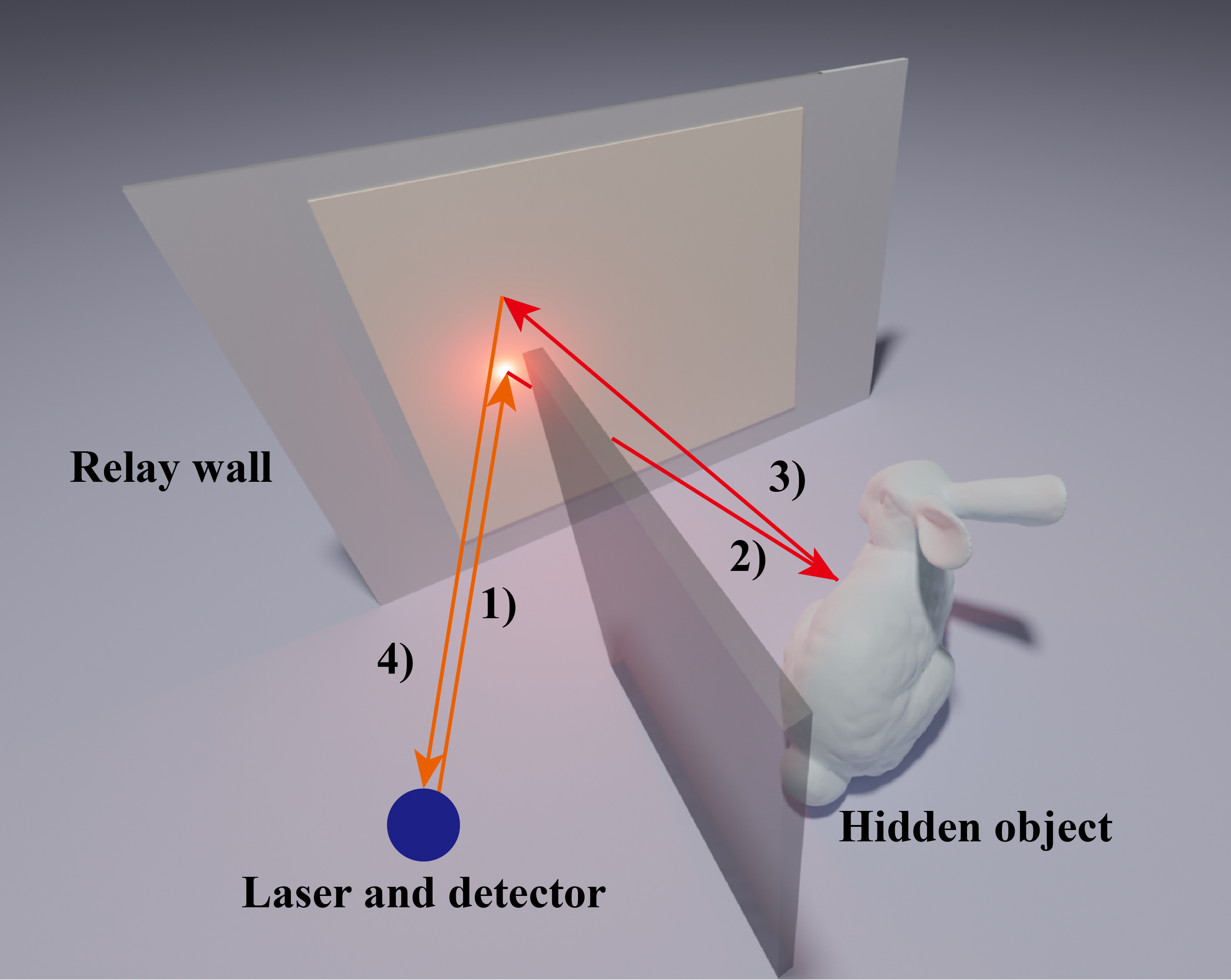}
  }
  \hfill
  \subfloat[Confocal setup]{
    \label{fig:setup_confocal}
    \includegraphics[width=0.25\textwidth]{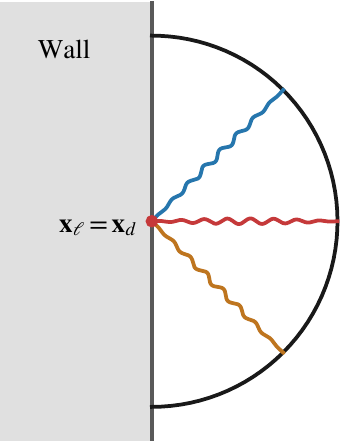}
  }
  \hfill
  \subfloat[Nonconfocal setup]{
    \label{fig:setup_nonconfocal}
    \includegraphics[width=0.25\textwidth]{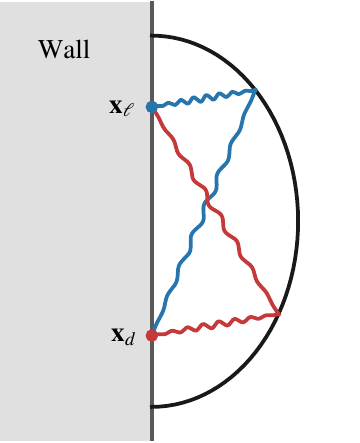}
  }

  \caption{Confocal and nonconfocal NLOS acquisition. The numbered segments in
  panel (a) connect the laser to the wall (1), the wall to the hidden object
  (2), the hidden object back to the wall (3), and the wall to the detector (4).
  In panel (b), the virtual source
  $\mathbf x_\ell$ and detector $\mathbf x_d$ coincide, so a time-of-flight
  level set is a wall-centered sphere. In panel (c), the two wall points differ
  and are the foci of an ellipsoidal level set.}
  \label{fig:setup_all}
  \vspace{-6pt} 
\end{figure}

In scalar models of confocal NLOS imaging, suitable normalization
relates the measurements to spherical means with centers on the relay
wall. A source supported on one side of the wall can be extended evenly
across it, allowing the problem to be formulated on the full space.
Spherical mean transforms also arise in thermoacoustic and photoacoustic
tomography \cite{KuchmentKunyansky2008TAT}, and their inversion and range
have been studied for several acquisition geometries
\cite{FinchPatchRakesh2004Spheres,Kunyansky2007Explicit,
XuWang2005BackProjection,AgranovskyKuchmentQuinto2007Range,
AgranovskyNguyen2010Range}.

For the planar acquisition geometry considered here, classical results
provide inversion formulas for even full-space functions
\cite{Fawcett1985SphericalAverages}, a Fourier description of the range,
and local uniqueness \cite{Andersson1988SphericalAverages}.
An $L^2$ isometry is also available after suitable rescaling and
filtering \cite{KimMoon2024Isometry}.
Even for smooth compactly supported sources, the radius-weighted
spherical means need not be square integrable over all wall centers
and positive radii
\cite{Andersson1988SphericalAverages,Klein2003PhysicallyMeaningful}.
These results provide the scalar theory used below.

Directional models differ from scalar-albedo models because geometric transport 
factors couple intensity with surface orientation. This has motivated computational 
approaches that exploit orientation or surface geometry through partial occlusion, 
surface optimization, and more general illumination and detection patterns
\cite{Heide2019PartialOccluders,Liu2023Arbitrary,Tsai2019SurfaceOptimization}.
The directional light-cone transform is particularly close to the present setting: 
it represents the hidden scene by a three-component directional-albedo field and reconstructs 
it through regularized vector deconvolution \cite{Young2020DLCT}. Related work has jointly 
estimated albedo and surface normals from noisy transients \cite{Liu2021SignalObject} and 
analyzed normal-dependent visibility and blind spots \cite{Liu2019FeatureVisibility}.

We ask what ideal confocal data determine about a volumetric directional field. 
In the model considered here, the data are radially weighted fluxes of a vector field 
$\mathbf u$ through wall-centered spheres. The divergence theorem, followed by 
radial differentiation, reduces them to spherical means of $h=-\nabla\!\cdot\mathbf u$. 
Combining the classical Fourier relation for plane-centered spherical means 
\cite{Andersson1988SphericalAverages,Fawcett1985SphericalAverages} with the Helmholtz 
multiplier yields an explicit Fourier--sine reconstruction of $\mathbb P_\nabla\mathbf u$. 
We prove that divergence-free fields form the entire full-data ambiguity and that this irrotational 
representative has the least Sobolev norm among fields producing the same data. 
This kernel structure is complementary to longitudinal ray measurements, for which potential fields 
are invisible \cite{Sharafutdinov1994TensorFields, Norton1989VectorTomography}.

The same Fourier representation yields an exact energy identity between the reconstructed field 
and the transformed data. This leads to a weighted Hilbert space that we prove to be exactly the 
Fourier--sine transformed data range. Standard Sobolev regularity of the preprocessed data is a 
stronger requirement and can fail even for smooth compactly supported fields, while the corresponding 
weighted range norm remains finite. For Schwartz fields in the model class, this regularity holds exactly 
when the depth integral of $h$ vanishes. Thus depth cancellation governs standard data regularity, 
not the validity of the reconstruction.

For partial data, we use Andersson's local uniqueness theorem
\cite{Andersson1988SphericalAverages} to show that measurements on an open subset of the relay wall, 
for all radii below a fixed bound, determine the divergence in the region covered by the corresponding wall-centered balls.

We implement the reconstruction using fast Fourier transforms and Stolt interpolation. 
The implementation is tested on explicitly defined fields using forward data generated independently 
from the directional surface integral, including a gauge-invariance test under divergence-free 
perturbations. We then present three imaging examples using model-generated, rendered, and measured transients.

The remainder of the paper is organized as follows. \Cref{sec:forward_model} introduces the 
forward model; \cref{sec:reconstruction_stability} gives the full-data reconstruction and 
range characterization; \cref{sec:partial_data_uniqueness} treats partial-data uniqueness; 
and \cref{sec:algorithm_results} presents the numerical implementation and experiments.

\section{Forward model}
\label{sec:forward_model}

Let $\Pi:=\{\mathbf x\in\mathbb R^3:x_3=0\}$ denote the relay wall, and write
$\mathbf x_\Pi=(\mathbf x',0)$ with $\mathbf x'=(x_1,x_2)$. We denote the
wall-centered open ball by $B(\mathbf x',r):=B_{\mathbb R^3}((\mathbf x',0),r)$.
Let $\mathbf x_\ell,\mathbf x_d\in\Pi$ be the virtual source and detector,
and let $t$ denote the hidden-scene round-trip time after subtraction of the
known wall-side travel times. In confocal acquisition,
$\mathbf x_\ell=\mathbf x_d=\mathbf x_\Pi$; with $r=ct/2$, the time-of-flight
level set is the sphere $\partial B(\mathbf x',r)$.

\subsection{Function spaces and Fourier conventions}
\label{subsec:function_spaces}
We write $\mathcal S$ for the Schwartz space and $\mathcal S'$ for the space of
tempered distributions; vector-valued spaces are understood componentwise. Our
Fourier convention is
\begin{equation*}
\begin{aligned}
    \widehat\varphi(\mathbf k)=\mathcal F\varphi(\mathbf k)
    &:={}\int_{\mathbb R^3}\varphi(\mathbf x)
        e^{-i\mathbf k\cdot\mathbf x}\,d\mathbf x,\\
    \mathcal F^{-1}\widehat\varphi(\mathbf x)
    &:={}\frac{1}{(2\pi)^3}\int_{\mathbb R^3}
        \widehat\varphi(\mathbf k)e^{i\mathbf k\cdot\mathbf x}\,d\mathbf k.
\end{aligned}
\end{equation*}
We use the extensions of the Fourier transform to $L^2$ and $\mathcal S'$.
Identities between Fourier representatives are understood almost everywhere.
For vector fields the transform is applied componentwise.

For $s\in\mathbb R$, the space $H^s(\mathbb R^3)$ consists of those elements of
$\mathcal S'(\mathbb R^3)$ for which the scalar norm below is finite. We define
the vector norm componentwise:
\begin{equation*}
\begin{aligned}
    \|\varphi\|_{H^s(\mathbb R^3)}
    &:={}\left(\int_{\mathbb R^3}(1+|\mathbf k|^2)^s
        |\widehat\varphi(\mathbf k)|^2\,d\mathbf k\right)^{1/2},\\
    \|\mathbf v\|_{H^s(\mathbb R^3)}^2
    &:={}\sum_{j=1}^3\|v_j\|_{H^s(\mathbb R^3)}^2
      =\int_{\mathbb R^3}(1+|\mathbf k|^2)^s
        |\widehat{\mathbf v}(\mathbf k)|^2\,d\mathbf k.
\end{aligned}
\end{equation*}

We write
$\mathbf k=(\mathbf k',k_3)$, where $\mathbf k'=(k_1,k_2)$. The partial Fourier transform in
$\mathbf x'=(x_1,x_2)$ is
\begin{equation*}
    \mathcal F_{\mathbf x'}\varphi(\mathbf k',x_3)
    :=
    \int_{\mathbb R^2}
    \varphi(\mathbf x',x_3)e^{-i\mathbf k'\cdot\mathbf x'}\,d\mathbf x'.
\end{equation*}
Subscripts on $\mathcal F$ indicate the transformed variables.

Reflection across the relay wall and its action on vectors are described by
\begin{equation*}
    \sigma(\mathbf x',x_3):=(\mathbf x',-x_3),
    \qquad
    J:=\diag(1,1,-1).
\end{equation*}
For $s\ge0$, set
\begin{equation*}
    H_e^s
    :=
    \left\{
    \mathbf u\in H^s(\mathbb R^3;\mathbb R^3):
    \mathbf u(\sigma\mathbf x)=J\mathbf u(\mathbf x)
    \text{ for a.e. }\mathbf x
    \right\}.
\end{equation*}
Thus $u_1,u_2$ are even and $u_3$ is odd in $x_3$.
For $\mathbf u\in H_e^1$, the divergence belongs to $L^2$ and is even in $x_3$.

For a function $f(\mathbf x',r)$ defined for $r>0$, we write
\begin{equation*}
    f^e(\mathbf x',r):=f(\mathbf x',|r|),
    \qquad
    f^o(\mathbf x',r):=\operatorname{sgn}(r)f(\mathbf x',|r|)
\end{equation*}
for its even and odd extensions in the radial variable.

\subsection{From the transient to directional data}
\label{subsec:confocal_derivation}
Write $\mathbb R^3_+:=\{\mathbf x\in\mathbb R^3:x_3>0\}$.
We adopt the linear directional model of
\cite{Young2020DLCT}. The hidden scene is represented by a compactly
supported directional-albedo field $\mathbf u_+\in C_c^1(\mathbb R^3_+;\mathbb R^3)$,
whose magnitude and direction encode albedo and surface
orientation. In the confocal setting, the transient is
\begin{equation*}
    \tau(\mathbf x_\Pi,\mathbf x_\Pi,t)
    =
    \int_{\mathbb R^3_+}
    \frac{(\mathbf x_\Pi-\mathbf y)\cdot\mathbf u_+(\mathbf y)}
         {|\mathbf x_\Pi-\mathbf y|^5}
    \delta\!\left(2|\mathbf x_\Pi-\mathbf y|-ct\right)
    \,d\mathbf y.
\end{equation*}

Using the reflection defined in \cref{subsec:function_spaces}, extend
$\mathbf u_+$ to $\mathbb R^3$ by
\begin{equation*}
    \mathbf u(\mathbf y):=
    \begin{cases}
        \mathbf u_+(\mathbf y), & y_3>0,\\
        J\mathbf u_+(\sigma\mathbf y), & y_3<0.
    \end{cases}
\end{equation*}
For $y_3>0$,
\begin{equation*}
    (\mathbf x_\Pi-\sigma\mathbf y)\cdot
    \mathbf u(\sigma\mathbf y)
    =
    (\mathbf x_\Pi-\mathbf y)\cdot\mathbf u_+(\mathbf y).
\end{equation*}
Since reflection preserves Lebesgue measure, the half-space integral is
half the corresponding full-space integral. Setting $t=2r/c$, using the
scaling of the delta distribution, and applying the coarea formula then
give
\begin{equation}
\begin{aligned}
    g(\mathbf x',r)
    &:=
    4\tau\!\left(
        \mathbf x_\Pi,\mathbf x_\Pi,\frac{2r}{c}
    \right)\\
    &=
    \int_{\partial B(\mathbf x',r)}
    \frac{(\mathbf x_\Pi-\mathbf y)\cdot\mathbf u(\mathbf y)}
         {r^5}
    \,dS(\mathbf y),
    \qquad
    \mathbf x'\in\mathbb R^2,\quad r>0.
\end{aligned}
\label{eq:directional_forward_surface}
\end{equation}

Since $(\mathbf x_\Pi-\mathbf y)/r$ is the inward unit normal to
$\partial B(\mathbf x',r)$, the data are the negative outward flux of
$\mathbf u$ through the wall-centered sphere, weighted by $r^{-4}$.

\subsection{Scalar reduction, gauge ambiguity}
\label{subsec:divergence_reduction_gauge}
For the extended field $\mathbf u\in C_c^1(\mathbb R^3;\mathbb R^3)$ of
\cref{subsec:confocal_derivation}, the surface formula
\eqref{eq:directional_forward_surface} and the divergence theorem express
the data as an integral over the enclosed ball $B(\mathbf x',r)$:
\begin{equation*}
    g(\mathbf x',r)
    =
    -\frac{1}{r^4}
    \int_{\partial B(\mathbf x',r)}
    \mathbf u(\mathbf y)\cdot\frac{\mathbf y-\mathbf x_\Pi}{r}
    \,dS(\mathbf y)
    =
    -\frac{1}{r^4}
    \int_{B(\mathbf x',r)}\nabla\!\cdot\mathbf u(\mathbf y)\,d\mathbf y.
\end{equation*}
For $\mathbf u\in H_e^1$, define
\begin{equation}
    (\mathcal M\mathbf u)(\mathbf x',r)
    :=
    -\frac{1}{r^4}
    \int_{B(\mathbf x',r)}
    \nabla\!\cdot\mathbf u(\mathbf y)\,d\mathbf y.
    \label{eq:flux_ball_representation}
\end{equation}
The integral is finite for every ball because
$\nabla\!\cdot\mathbf u\in L^2(\mathbb R^3)$. For fields of this class,
the weak Gauss--Green formula identifies this definition with the surface
formula, where $\mathbf u$ on the sphere is its Sobolev trace.
Equation \eqref{eq:flux_ball_representation} depends on $\mathbf u$ only
through its divergence. Hence $\mathcal M(\mathbf u+\mathbf v)=\mathcal M\mathbf u$
for every divergence-free $\mathbf v\in H_e^1$.
For example, if $a\in C_c^\infty(\mathbb R^3)$ is even in $x_3$, then
$\mathbf v:=(\partial_2a,-\partial_1a,0)$ belongs to $H_e^1$ and is divergence free;
choosing $a$ so that $\mathbf v\not\equiv0$ gives a nonzero element of the kernel.
\Cref{cor:full_data_identifiability} shows that divergence-free fields form
the entire full-data ambiguity.

For $h\in L^1_{\mathrm{loc}}(\mathbb R^3)$, define the wall-centered
spherical mean by
\begin{equation*}
    \mathcal Rh(\mathbf x',r)
    :=
    \frac{1}{4\pi r^2}
    \int_{\partial B(\mathbf x',r)}
    h(\mathbf y)\,dS(\mathbf y),
\end{equation*}
for almost every $r>0$.
The following two lemmas give the spherical-mean reduction and the basic
mapping properties of $\mathcal M$.

\begin{lemma}
\label{lem:spherical_mean_reduction}
Let $\mathbf u\in H_e^1$ and let
$g=\mathcal M\mathbf u$, and set
$h:=-\nabla\!\cdot\mathbf u$. Then
\begin{equation}
    \frac{1}{4\pi r^2}
    \frac{\partial}{\partial r}
    \left(r^4g(\mathbf x',r)\right)
    =
    \mathcal Rh(\mathbf x',r)
    \label{eq:radial_derivative_spherical_mean}
\end{equation}
for almost every
$(\mathbf x',r)\in\mathbb R^2\times(0,\infty)$, the radial derivative being understood weakly. 
\end{lemma}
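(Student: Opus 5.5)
The plan is to write $r^4 g(\mathbf x',r)=\int_{B(\mathbf x',r)}h(\mathbf y)\,d\mathbf y=:V(\mathbf x',r)$, which holds by the definition \eqref{eq:flux_ball_representation}, since $h=-\nabla\!\cdot\mathbf u\in L^2(\mathbb R^3)$. The identity \eqref{eq:radial_derivative_spherical_mean} then says that $\partial_r V(\mathbf x',r)=4\pi r^2\mathcal Rh(\mathbf x',r)=\int_{\partial B(\mathbf x',r)}h\,dS$. This is a statement about integrating $h$ in polar coordinates centered at $(\mathbf x',0)$.

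Fix $\mathbf x'$. By the polar-coordinates (coarea) formula applied to $h\in L^1_{\mathrm{loc}}$, we have
\begin{equation*}
V(\mathbf x',r)=\int_0^r \phi(\mathbf x',\rho)\,d\rho,\qquad \phi(\mathbf x',\rho):=\int_{\partial B(\mathbf x',\rho)}h\,dS .
\end{equation*}
Here $\phi(\mathbf x',\cdot)\in L^1_{\mathrm{loc}}([0,\infty))$, and it is finite for a.e. $\rho$, which is exactly the sense in which $\mathcal Rh$ was defined. Hence $r\mapsto V(\mathbf x',r)$ is locally absolutely continuous on $(0,\infty)$. Its weak derivative (indeed its a.e. classical derivative, by the Lebesgue differentiation theorem) equals $\phi(\mathbf x',r)=4\pi r^2\mathcal Rh(\mathbf x',r)$. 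Dividing by $4\pi r^2$ gives the claim for every $\mathbf x'$ and a.e. $r>0$.

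To get the statement for a.e. $(\mathbf x',r)$ jointly, and to interpret the weak radial derivative on $\mathbb R^2\times(0,\infty)$, I would check measurability. The map $(\mathbf x',\rho)\mapsto\phi(\mathbf x',\rho)=\rho^2\int_{S^2}h((\mathbf x',0)+\rho\omega)\,d\omega$ is measurable and locally integrable on $\mathbb R^2\times(0,\infty)$, by Tonelli applied to $|h|$ composed with the smooth map $(\mathbf x',\rho,\omega)\mapsto(\mathbf x',0)+\rho\omega$; local boundedness of the resulting integrals is where $h\in L^2\subset L^1_{\mathrm{loc}}$ is used. Then for a test function $\psi\in C_c^\infty(\mathbb R^2\times(0,\infty))$, Fubini together with the one-variable identity gives $\int V\,\partial_r\psi=-\int\phi\,\psi$. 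This is the weak radial derivative statement, and the pointwise a.e. version follows from the same Fubini argument.

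The main technical obstacle is the joint measurability and local integrability of $\phi$ in $(\mathbf x',\rho)$, which justifies the Fubini steps. I would handle it through the change of variables above rather than via trace theory. No use of the Sobolev trace of $\mathbf u$ is needed, since the ball formula defines $\mathcal M$.
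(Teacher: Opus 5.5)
Your proposal is correct and follows the same route as the paper: write $r^4g(\mathbf x',r)=\int_{B(\mathbf x',r)}h\,d\mathbf y$ and differentiate in $r$ using the coarea (polar-coordinates) formula. Your extra steps make explicit what the paper's two-line proof leaves implicit. These are local absolute continuity in $r$ for each fixed $\mathbf x'$, and joint measurability of the sphere integral through the parametrization $(\mathbf x',0)+\rho\boldsymbol\omega$, which justifies the Fubini step for the weak derivative.
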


\begin{proof}
Since $h\in L^2\subset L^1_{\mathrm{loc}}$, the coarea formula applied to
\eqref{eq:flux_ball_representation} gives
\begin{equation*}
    \partial_r(r^4g)(\mathbf x',r)
    =\int_{\partial B(\mathbf x',r)}h(\mathbf y)\,dS(\mathbf y)
    =4\pi r^2\mathcal Rh(\mathbf x',r)
\end{equation*}
for almost every $(\mathbf x',r)$. Division by $4\pi r^2$ yields
\eqref{eq:radial_derivative_spherical_mean}.
\end{proof}

\begin{lemma}
\label{lem:forward_operator_bound}
The operator in \eqref{eq:flux_ball_representation} is well defined and
\begin{equation*}
    \mathcal M:
    H_e^1
    \longrightarrow
    L^2_{\mathrm{loc}}(\mathbb R^2\times(0,\infty)).
\end{equation*}
There is an absolute constant $C>0$ such that, for almost every $r>0$,
\begin{equation}
    \|(\mathcal M\mathbf u)(\cdot,r)\|_{L^2(\mathbb R^2)}^2
    \le Cr^{-3}\|\nabla\!\cdot\mathbf u\|_{L^2(\mathbb R^3)}^2.
    \label{eq:forward_slice_bound}
\end{equation}
Consequently, for every $0<a<b<\infty$ there exists $C_{a,b}>0$ such that
\begin{equation}
    \|\mathcal M\mathbf u\|_{L^2(\mathbb R^2\times(a,b))}
    \le
    C_{a,b}\|\nabla\!\cdot\mathbf u\|_{L^2(\mathbb R^3)}
    \le
    C_{a,b}\|\mathbf u\|_{H^1(\mathbb R^3)}.
    \label{eq:forward_local_bound}
\end{equation}
If, for some $d>0$, $\nabla\!\cdot\mathbf u=0$ almost everywhere in the slab
$|x_3|<d$, then $\mathcal M\mathbf u=0$ for $0<r<d$ and
\begin{equation*}
    \|\mathcal M\mathbf u\|_{L^2(\mathbb R^2\times(0,\infty))}
    \le Cd^{-1}\|\nabla\!\cdot\mathbf u\|_{L^2(\mathbb R^3)}.
\end{equation*}
\end{lemma}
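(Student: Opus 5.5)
The slice bound \eqref{eq:forward_slice_bound} is the core estimate; the other claims follow from it. Set $f:=\nabla\!\cdot\mathbf u\in L^2(\mathbb R^3)$ and write the ball integral as a convolution in the wall variables:
\begin{equation*}
    r^4(\mathcal M\mathbf u)(\mathbf x',r)=-\int_{-r}^{r}\int_{|\mathbf y'-\mathbf x'|<\rho(y_3)} f(\mathbf y',y_3)\,d\mathbf y'\,dy_3,
    \qquad \rho(y_3):=\sqrt{r^2-y_3^2}.
\end{equation*}
For fixed $y_3$ the inner integral is $(f(\cdot,y_3)*\chi_{D_{\rho}})(\mathbf x')$, where $\chi_{D_\rho}$ is the indicator of the planar disc of radius $\rho$. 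Young's inequality gives
\begin{equation*}
    \|f(\cdot,y_3)*\chi_{D_\rho}\|_{L^2(\mathbb R^2)}\le \pi\rho^2\|f(\cdot,y_3)\|_{L^2(\mathbb R^2)}\le\pi r^2\|f(\cdot,y_3)\|_{L^2(\mathbb R^2)}.
\end{equation*}
Minkowski's integral inequality in $y_3$, followed by Cauchy--Schwarz over $(-r,r)$, then yields
\begin{equation*}
    r^4\|(\mathcal M\mathbf u)(\cdot,r)\|_{L^2}\le\pi r^2\int_{-r}^r\|f(\cdot,y_3)\|_{L^2}\,dy_3\le\pi r^2(2r)^{1/2}\|f\|_{L^2(\mathbb R^3)}.
\end{equation*}
Squaring gives $\|(\mathcal M\mathbf u)(\cdot,r)\|_{L^2}^2\le 2\pi^2 r^{-3}\|f\|^2$, which is \eqref{eq:forward_slice_bound} with $C=2\pi^2$.

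Well-definedness and measurability are handled next. For each ball the integral is finite by Cauchy--Schwarz. The function $(\mathbf x',r)\mapsto\int_{B(\mathbf x',r)}f$ is continuous, because $\chi_{B(\mathbf x',r)}\to\chi_{B(\mathbf x_0',r_0)}$ in $L^2$ as $(\mathbf x',r)\to(\mathbf x_0',r_0)$. In fact the slice bound holds for every $r>0$, not merely almost every. Integrating it over $r\in(a,b)$ gives $\|\mathcal M\mathbf u\|_{L^2(\mathbb R^2\times(a,b))}^2\le C\|f\|^2\int_a^b r^{-3}\,dr$, which is \eqref{eq:forward_local_bound}. The final comparison $\|\nabla\!\cdot\mathbf u\|_{L^2}\le\sqrt3\,\|\mathbf u\|_{H^1}$ is immediate on the Fourier side from $|\mathbf k\cdot\widehat{\mathbf u}|^2\le|\mathbf k|^2|\widehat{\mathbf u}|^2$ (Cauchy--Schwarz gives a constant $1$, which can be absorbed). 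Since every compact set in $\mathbb R^2\times(0,\infty)$ lies in some $\mathbb R^2\times(a,b)$, this gives $\mathcal M:H_e^1\to L^2_{\mathrm{loc}}$.

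For the slab statement, if $f=0$ in $|x_3|<d$, then for $r<d$ the ball $B(\mathbf x',r)$ lies inside the slab, so $\mathcal M\mathbf u=0$ there. For $r\ge d$ the slice bound applies, and
\begin{equation*}
    \|\mathcal M\mathbf u\|_{L^2(\mathbb R^2\times(0,\infty))}^2\le C\|f\|^2\int_d^\infty r^{-3}\,dr=\tfrac{C}{2}d^{-2}\|f\|^2.
\end{equation*}
Taking square roots gives the claimed bound with constant $(C/2)^{1/2}$, relabelled as $C$.

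The main obstacle is the slice bound with the sharp power $r^{-3}$. The naive bound $|\int_B f|\le|B|^{1/2}\|f\|$ is pointwise and does not give an $L^2(\mathbb R^2)$ norm in $\mathbf x'$. The convolution-plus-Minkowski structure is what recovers the correct $r^{3}$ volume scaling, split as $r^2$ from the disc area and $r^{1/2}$ (squared to $r$) from the thickness of the ball in $y_3$. An alternative that also works is to use $\|\chi_{B_r}\|_{L^1}=\tfrac43\pi r^3$ together with a Fubini-type argument; either way the constant is absolute.
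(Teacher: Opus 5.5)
Your argument is correct, but you reach the slice bound by a different route than the paper. The paper stays in three dimensions. Cauchy--Schwarz over the ball, keeping the \emph{local} norm, gives $|(\mathcal M\mathbf u)(\mathbf x',r)|^2\le\frac43\pi r^{-5}\int_{B(\mathbf x',r)}|\nabla\!\cdot\mathbf u|^2\,d\mathbf y$. For fixed $\mathbf y$, the centers $\mathbf x'$ with $\mathbf y\in B(\mathbf x',r)$ form a set of area at most $\pi r^2$, so integrating over centers with Fubini gives \eqref{eq:forward_slice_bound} with $C=\frac43\pi^2$.

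Your closing remark that the naive Cauchy--Schwarz bound cannot produce an $L^2(\mathbb R^2)$ norm holds only if $\|f\|_{L^2(B(\mathbf x',r))}$ is replaced by $\|f\|_{L^2(\mathbb R^3)}$. With the local norm, it is exactly the paper's two-line proof, which is your ``alternative''.

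Your slice-wise version (planar Young for each $y_3$, then Minkowski and Cauchy--Schwarz in depth) is a bit longer and gives the weaker constant $2\pi^2$. In exchange, it makes the anisotropic scaling visible: disc area $r^2$ times depth thickness $r^{1/2}$. It also passes through the mixed-norm estimate $r^4\|(\mathcal M\mathbf u)(\cdot,r)\|_{L^2}\le\pi r^2\int_{-r}^r\|f(\cdot,y_3)\|_{L^2(\mathbb R^2)}\,dy_3$.

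Your continuity argument is a genuine addition. It settles measurability and upgrades ``almost every $r$'' to every $r$, which the paper leaves implicit.

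One small correction: the same $C_{a,b}$ appears on both sides of \eqref{eq:forward_local_bound}. So you need $\|\nabla\!\cdot\mathbf u\|_{L^2}\le\|\mathbf u\|_{H^1}$ with constant one, not $\sqrt3$ ``absorbed''. Your Fourier-side Cauchy--Schwarz gives this directly; with the paper's unnormalized $H^s$ norm the constant is even $(2\pi)^{-3/2}$.
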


The proof is given in \cref{app:forward_fourier_sine_proofs}.

\begin{remark}
\label{rem:finite_resolution_surface}
Let $\Omega_+\Subset\mathbb R^3_+$ be a bounded domain with smooth boundary,
and set $\Omega:=\Omega_+\cup\sigma(\Omega_+)$ and $S:=\partial\Omega$.
Write $\chi_\Omega$ for its indicator, $\mathbf n$ for the outward unit normal,
and $\delta_S$ for surface measure on $S$. In the sense of distributions,
$\nabla\chi_\Omega=-\mathbf n\delta_S$.
For constant directional albedo $a_0>0$, the oriented surface field
$\mathbf u_S:=a_0\mathbf n\delta_S=-a_0\nabla\chi_\Omega$ is therefore a gradient.
Mollifying with a nonnegative radial $\eta\in C_c^\infty(B_{\mathbb R^3}(0,1))$, $\int\eta=1$, $\eta_\varepsilon(\mathbf x):=\varepsilon^{-3}\eta(\mathbf x/\varepsilon)$, gives
\begin{equation*}
    \mathbf u_\varepsilon:=\mathbf u_S*\eta_\varepsilon
    =\nabla\phi_\varepsilon,
    \qquad \phi_\varepsilon:=-a_0\chi_\Omega*\eta_\varepsilon.
\end{equation*}
Since $\phi_\varepsilon$ is even in $x_3$ and belongs to $C_c^\infty$,
$\mathbf u_\varepsilon\in H_e^s$ for every $s\ge0$.
Oriented surface fields of this kind are thus irrotational already at the level
of the model, which is one reason to single out the irrotational component
in what follows.
\end{remark}

\section{Full-data reconstruction and range characterization}
\label{sec:reconstruction_stability}

This section develops the full-data reconstruction and characterizes the
associated transformed data range and Sobolev data regularity. Throughout,
\begin{equation*}
    \mathbf u\in H_e^1,\qquad g=\mathcal M\mathbf u,
    \qquad
    h:=-\nabla\!\cdot\mathbf u,
\end{equation*}
with data on the full observation domain $\mathbb R^2\times(0,\infty)$.

\subsection{Canonical reconstruction}
\label{subsec:reconstruction_map}

We first recover the scalar $h=-\nabla\!\cdot\mathbf u$ from the full data
and then construct the irrotational vector field with this divergence.

Define the preprocessed data by
\begin{equation}
    \gamma(\mathbf x',r)
    :=
    \frac{1}{4\pi r}\partial_r
    \bigl(r^4g(\mathbf x',r)\bigr).
    \label{eq:preprocessing_operator}
\end{equation}
By \cref{lem:spherical_mean_reduction},
$\gamma(\mathbf x',r)=r\mathcal Rh(\mathbf x',r)$.
For model data, this preprocessing is reversible; see
\eqref{eq:preprocessing_inverse}.

Let $\gamma^o$ denote the odd extension of $\gamma$ in the radial variable.
Since $\mathcal Rh(\mathbf x',r)$ is even in $r$,
$\gamma^o(\mathbf x',r)=r\mathcal Rh(\mathbf x',|r|)$.
By \eqref{eq:preprocessed_slice_bound},
$\gamma^o\in\mathcal S'(\mathbb R^3)$.
We define
\begin{equation}
    \Theta_\gamma(\mathbf k',\rho)
    :=
    \frac{i}{2}\widehat{\gamma^o}(\mathbf k',\rho),
    \qquad \rho>0.
    \label{eq:sine_transform_theta}
\end{equation}
Whenever the classical integral is defined,
\begin{equation*}
    \Theta_\gamma(\mathbf k',\rho)
    =
    \int_0^\infty
    \mathcal F_{\mathbf x'}\gamma(\mathbf k',r)
    \sin(\rho r)\,dr.
\end{equation*}

To state the Fourier relation, set
\begin{equation*}
\begin{aligned}
    \Lambda
    :=
    \left\{
        (\mathbf k',\rho)\in\mathbb R^2\times(0,\infty):
        \rho>|\mathbf k'|
    \right\},
    \qquad
    q(\mathbf k',\rho)
    :=
    \sqrt{\rho^2-|\mathbf k'|^2}.
\end{aligned}
\end{equation*}
Thus $\rho^2=|\mathbf k'|^2+q^2$ on $\Lambda$.
The Fourier relation for plane-centered spherical means
\cite{Andersson1988SphericalAverages,Fawcett1985SphericalAverages}
takes the following form in our normalization.

\begin{proposition}
\label{prop:fourier_sine_representation}
Let $\mathbf u\in H_e^1$, set
$g=\mathcal M\mathbf u$ and $h=-\nabla\!\cdot\mathbf u$, and let
$\gamma$ be defined by \eqref{eq:preprocessing_operator}.
Then the Fourier transform of $\gamma^o$ is represented by the locally
integrable function
\begin{equation}
    \widehat{\gamma^o}(\mathbf k',\rho)
    =
    \begin{cases}
    \displaystyle-i\operatorname{sgn}(\rho)
      \frac{\widehat h(\mathbf k',q(\mathbf k',|\rho|))}
           {q(\mathbf k',|\rho|)}, & |\rho|>|\mathbf k'|,\\[6pt]
    0, & |\rho|\le|\mathbf k'|.
    \end{cases}
    \label{eq:full_odd_cone_representation}
\end{equation}
Consequently,
\begin{equation}
    \Theta_\gamma(\mathbf k',\rho)
    =
    \frac{\widehat h(\mathbf k',q(\mathbf k',\rho))}
         {2q(\mathbf k',\rho)}
    \qquad
    \text{for a.e. }(\mathbf k',\rho)\in\Lambda,
    \label{eq:fourier_sine_density}
\end{equation}
and hence
\begin{equation}
    \widehat h(\mathbf k',k_3)
    =
    2|k_3|\Theta_\gamma(\mathbf k',|\mathbf k|)
    \qquad
    \text{for a.e. }\mathbf k\in\mathbb R^3.
    \label{eq:cone_factor_identity}
\end{equation}
\end{proposition}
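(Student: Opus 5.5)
We first prove \eqref{eq:full_odd_cone_representation} for $h\in\mathcal S(\mathbb R^3)$ even in $x_3$, and then extend to $\mathbf u\in H_e^1$ by density. For Schwartz $h$ we write the spherical mean via the surface measure of the unit sphere:
\begin{equation*}
\gamma^o(\mathbf x',r)=r\mathcal Rh(\mathbf x',|r|)=\frac{r}{4\pi}\int_{S^2}h((\mathbf x',0)+r\boldsymbol\omega)\,dS(\boldsymbol\omega).
\end{equation*}
Taking the partial Fourier transform in $\mathbf x'$ and writing $h$ through its Fourier inversion in all variables gives
\begin{equation*}
\mathcal F_{\mathbf x'}\gamma^o(\mathbf k',r)=\frac{1}{2\pi}\int_{\mathbb R}\widehat h(\mathbf k',k_3)\,r\,j_0\bigl(r\sqrt{|\mathbf k'|^2+k_3^2}\bigr)\,dk_3 .
\end{equation*}
Here we used $\frac1{4\pi}\int_{S^2}e^{i r\mathbf k\cdot\boldsymbol\omega}dS=\sin(r|\mathbf k|)/(r|\mathbf k|)$. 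Thus
\begin{equation*}
\mathcal F_{\mathbf x'}\gamma^o(\mathbf k',r)=\frac{1}{2\pi}\int_{\mathbb R}\widehat h(\mathbf k',k_3)\frac{\sin(r|\mathbf k|)}{|\mathbf k|}dk_3 .
\end{equation*}
Since $h$ is even in $x_3$, the function $\widehat h(\mathbf k',\cdot)$ is even, so this equals $\frac1\pi\int_0^\infty \widehat h(\mathbf k',k_3)\sin(r|\mathbf k|)/|\mathbf k|\,dk_3$. We now change variables $\rho=\sqrt{|\mathbf k'|^2+k_3^2}$, so that $dk_3=\rho\,d\rho/q$ and $k_3=q(\mathbf k',\rho)$. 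This gives
\begin{equation*}
\mathcal F_{\mathbf x'}\gamma^o(\mathbf k',r)=\frac1\pi\int_{|\mathbf k'|}^\infty \frac{\widehat h(\mathbf k',q)}{q}\sin(\rho r)\,d\rho .
\end{equation*}
This is a one-dimensional Fourier sine integral in $\rho$. Taking the Fourier transform in $r$, in the $\mathcal S'$ sense using $\mathcal F_r[\sin(\rho r)]=-i\pi(\delta_\rho-\delta_{-\rho})$, produces exactly the odd function in \eqref{eq:full_odd_cone_representation}. That function is supported in $|\rho|>|\mathbf k'|$. Local integrability holds because $\int\!\!\int_{\rho>|\mathbf k'|}|\widehat h|/q\,d\rho\,d\mathbf k'=\int|\widehat h(\mathbf k)|/\rho\,d\mathbf k$ over compact sets, and $1/|\mathbf k|$ is locally integrable in $\mathbb R^3$.

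Next we extend to $\mathbf u\in H_e^1$, which we expect to be the main obstacle. Here $h\in L^2$ is even in $x_3$. Moreover $\widehat h=-i\mathbf k\cdot\widehat{\mathbf u}$, so $\widehat h/|\mathbf k|\in L^2$. We approximate by Schwartz fields $\mathbf u_n\to\mathbf u$ in $H_e^1$; mollification together with cutoffs even in $x_3$ preserves the symmetry. The argument then has two sides.

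On the data side, \eqref{eq:preprocessed_slice_bound} (together with \cref{lem:forward_operator_bound}) gives $\gamma_n^o\to\gamma^o$ in $\mathcal S'$. Since $\gamma_n=r\mathcal Rh_n$ and the slice bound depends on $\|h_n-h\|_{L^2}$, this yields convergence against test functions via the polynomial-in-$r$ bound.

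On the Fourier side, the right-hand sides converge in $L^1_{\mathrm{loc}}$, and in fact in a weighted $L^2$. The change of variables $(\mathbf k',\rho)\mapsto(\mathbf k',q)$ has Jacobian $\rho/q$. Hence
\begin{equation*}
\int\!\!\int\frac{|\widehat h_n-\widehat h|^2}{q^2}\,w\;d\rho\,d\mathbf k'=\int\frac{|\widehat h_n-\widehat h|^2}{k_3\,|\mathbf k|}\,w\;d\mathbf k .
\end{equation*}
The weight $1/(k_3|\mathbf k|)$ is awkward, so we prefer to pair with a test function $\psi$ directly. The pairing equals
\begin{equation*}
\int_{\mathbb R^3}\widehat h(\mathbf k)\,\frac{\psi(\mathbf k',|\mathbf k|)}{|\mathbf k|}\,d\mathbf k
\end{equation*}
up to constants and odd symmetrization. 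This is continuous in $\widehat h\in L^2$ because $\psi(\mathbf k',|\mathbf k|)/|\mathbf k|$ is in $L^2$: it is bounded with rapid decay and has an $L^2_{\mathrm{loc}}$ singularity $1/|\mathbf k|$ in three dimensions. Passing to the limit then identifies $\widehat{\gamma^o}$ as a distribution with the stated locally integrable function.

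Finally, \eqref{eq:fourier_sine_density} follows immediately by multiplying by $i/2$ for $\rho>0$, since $(i/2)(-i)=1/2$. For \eqref{eq:cone_factor_identity}, we substitute $\rho=|\mathbf k|$, so that $q(\mathbf k',|\mathbf k|)=|k_3|$. This gives $\Theta_\gamma(\mathbf k',|\mathbf k|)=\widehat h(\mathbf k',|k_3|)/(2|k_3|)$, and the evenness of $\widehat h$ in $k_3$ yields the identity a.e. The map $\mathbf k\mapsto(\mathbf k',|\mathbf k|)$ sends null sets to null sets away from $k_3=0$, by the same Jacobian computation, so "a.e." transfers.
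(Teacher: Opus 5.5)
Your proposal is correct and follows essentially the paper's argument: you prove the Schwartz case via the spherical-average Fourier formula and $\mathcal F_r[\sin(\lambda r)]$, then pass to $H_e^1$ by density, using the slice bound \eqref{eq:preprocessed_slice_bound} on the data side and $\psi(\mathbf k',\pm|\mathbf k|)/|\mathbf k|\in L^2(\mathbb R^3)$ on the Fourier side. The differences from the paper (changing variables to $\rho$ before the radial transform, and approximating $\mathbf u$ in $H_e^1$ rather than $h$ in $L^2$) are cosmetic; one small point is that for $L^2$ data, local integrability of the cone density needs Cauchy--Schwarz with $|\mathbf k|^{-1}\in L^2_{\mathrm{loc}}$, not just local integrability of $|\mathbf k|^{-1}$.
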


\begin{proof}
The full-space formula
\eqref{eq:full_odd_cone_representation}
is proved in \cref{app:forward_fourier_sine_proofs}.
Equation \eqref{eq:fourier_sine_density} follows from
\eqref{eq:sine_transform_theta}.
For $k_3\ne0$, $q(\mathbf k',|\mathbf k|)=|k_3|$.
Since $\widehat h$ is even in $k_3$, substituting
$\rho=|\mathbf k|$ into \eqref{eq:fourier_sine_density} gives
\eqref{eq:cone_factor_identity}.
\end{proof}

Proposition~\ref{prop:fourier_sine_representation} gives the scalar
reconstruction directly. Define
\begin{equation}
    \widehat{h_g}(\mathbf k)
    :=
    2|k_3|\Theta_\gamma(\mathbf k',|\mathbf k|).
    \label{eq:scalar_synthesis_spectrum}
\end{equation}
By \eqref{eq:cone_factor_identity}, $h_g=h$ for model data. We then select
the irrotational vector field with this divergence by setting
\begin{equation}
    \widehat{\mathcal Ig}(\mathbf k)
    :=
    \frac{i\mathbf k}{|\mathbf k|^2}\widehat{h_g}(\mathbf k),
    \qquad \mathbf k\ne0.
    \label{eq:vector_synthesis_spectrum}
\end{equation}
Thus $\mathcal Ig$ is the irrotational Fourier solution of
$-\nabla\!\cdot\mathbf v=h_g$.

Let $\mathbb P_\nabla$ denote the Helmholtz projection onto
irrotational fields, defined by
\begin{equation*}
    \widehat{\mathbb P_\nabla\mathbf v}(\mathbf k)
    :=
    \frac{\mathbf k\otimes\mathbf k}{|\mathbf k|^2}
    \widehat{\mathbf v}(\mathbf k),
    \qquad \mathbf k\ne0.
\end{equation*}
The following theorem shows that
$\mathcal Ig=\mathbb P_\nabla\mathbf u$ for exact model data.

\begin{theorem}
\label{thm:canonical_reconstruction}
Let $s\ge1$ and $\mathbf u\in H_e^s$, with
$g=\mathcal M\mathbf u$.
Then
\begin{equation}
    \mathcal Ig
    =
    \mathbb P_\nabla\mathbf u
    \in H_e^s,
    \qquad
    -\nabla\!\cdot\mathcal Ig
    =
    -\nabla\!\cdot\mathbf u.
    \label{eq:helmholtz_projection}
\end{equation}
Moreover, $\mathcal Ig$ reproduces the data:
\begin{equation}
    \mathcal M(\mathcal Ig)=g.
    \label{eq:reconstruction_data_consistency}
\end{equation}
\end{theorem}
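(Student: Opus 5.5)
The argument works entirely on the Fourier side, using Proposition~\ref{prop:fourier_sine_representation}. Because $h=-\nabla\!\cdot\mathbf u$ with $\mathbf u\in H_e^s\subset H^1$, we have $\widehat h(\mathbf k)=-i\mathbf k\cdot\widehat{\mathbf u}(\mathbf k)$ almost everywhere. By \eqref{eq:cone_factor_identity} and the definition \eqref{eq:scalar_synthesis_spectrum}, $\widehat{h_g}=\widehat h$ almost everywhere. Substituting this into \eqref{eq:vector_synthesis_spectrum} gives, for $\mathbf k\neq0$,
\begin{equation*}
    \widehat{\mathcal Ig}(\mathbf k)=\frac{i\mathbf k}{|\mathbf k|^2}\bigl(-i\mathbf k\cdot\widehat{\mathbf u}(\mathbf k)\bigr)
    =\frac{\mathbf k\otimes\mathbf k}{|\mathbf k|^2}\widehat{\mathbf u}(\mathbf k)
    =\widehat{\mathbb P_\nabla\mathbf u}(\mathbf k).
\end{equation*}
The origin is a null set, so $\mathcal Ig=\mathbb P_\nabla\mathbf u$ as elements of $\mathcal S'$. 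This also shows that the a priori definition of $\mathcal Ig$, where $|\mathbf k|^{-1}$ appears, gives a genuine $L^2$-based object: the spectrum is dominated by $|\widehat{\mathbf u}|$.

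Next comes membership in $H_e^s$. The matrix $\mathbf k\otimes\mathbf k/|\mathbf k|^2$ is an orthogonal projection, so $|\widehat{\mathbb P_\nabla\mathbf u}(\mathbf k)|\le|\widehat{\mathbf u}(\mathbf k)|$ pointwise. Hence $\|\mathbb P_\nabla\mathbf u\|_{H^s}\le\|\mathbf u\|_{H^s}$. The components are real-valued because the multiplier is real and even in $\mathbf k$, which preserves the Hermitian symmetry $\widehat{\mathbf u}(-\mathbf k)=\overline{\widehat{\mathbf u}(\mathbf k)}$.

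For the mirror symmetry, $\mathbf u(\sigma\mathbf x)=J\mathbf u(\mathbf x)$ becomes $\widehat{\mathbf u}(J\mathbf k)=J\widehat{\mathbf u}(\mathbf k)$ on the Fourier side, since $\sigma=J$ is orthogonal with determinant $\pm1$. The multiplier $P(\mathbf k)=\mathbf k\otimes\mathbf k/|\mathbf k|^2$ satisfies $P(J\mathbf k)=JP(\mathbf k)J$. Therefore
\begin{equation*}
    P(J\mathbf k)\widehat{\mathbf u}(J\mathbf k)=JP(\mathbf k)JJ\widehat{\mathbf u}(\mathbf k)=JP(\mathbf k)\widehat{\mathbf u}(\mathbf k),
\end{equation*}
using $J^2=I$. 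So $\mathbb P_\nabla\mathbf u\in H_e^s$.

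The divergence identity follows from $-i\mathbf k\cdot P(\mathbf k)\widehat{\mathbf u}=-i\mathbf k\cdot\widehat{\mathbf u}$, which holds because $\mathbf k\cdot P(\mathbf k)\mathbf w=\mathbf k\cdot\mathbf w$. This gives $-\nabla\!\cdot\mathcal Ig=h$.

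Data consistency is then immediate. By \eqref{eq:flux_ball_representation}, $\mathcal M$ depends on its argument only through the divergence, and $\mathcal Ig\in H_e^s\subset H_e^1$ lies in its domain. Since $\nabla\!\cdot\mathcal Ig=\nabla\!\cdot\mathbf u$ in $L^2$, the ball integrals coincide for every $(\mathbf x',r)$, so $\mathcal M(\mathcal Ig)=g$.

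The only delicate point is the first step. There one must make sure that $\Theta_\gamma$, defined distributionally through $\widehat{\gamma^o}$, may legitimately be evaluated on the cone $\rho=|\mathbf k|$ as a function. This evaluation is exactly what Proposition~\ref{prop:fourier_sine_representation} supplies: it represents $\widehat{\gamma^o}$ by a locally integrable function and proves \eqref{eq:cone_factor_identity} almost everywhere. The change of variables $(\mathbf k',k_3)\mapsto(\mathbf k',|\mathbf k|)$ sends null sets to null sets away from $k_3=0$, which is itself a null set. So I expect no real obstacle beyond citing that proposition carefully.
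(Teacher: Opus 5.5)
Your proposal is correct and follows essentially the same route as the paper: substitute $\widehat{h_g}=\widehat h=-i\mathbf k\cdot\widehat{\mathbf u}$ into the synthesis formula to obtain the projection multiplier $\mathbf k\otimes\mathbf k/|\mathbf k|^2$. Then use that this multiplier is an orthogonal projection conjugating correctly under $J$, and conclude data consistency because the ball formula depends only on the divergence. Your extra checks, namely real-valuedness via Hermitian symmetry and the explicit Fourier-side form $\widehat{\mathbf u}(J\mathbf k)=J\widehat{\mathbf u}(\mathbf k)$ of the mirror condition, supply details that the paper leaves implicit.
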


\begin{proof}
For model data, \eqref{eq:cone_factor_identity} gives
$\widehat{h_g}=\widehat h$. Since
$\widehat h(\mathbf k)
=-i\mathbf k\cdot\widehat{\mathbf u}(\mathbf k)$,
\eqref{eq:vector_synthesis_spectrum} yields
\begin{equation*}
    \widehat{\mathcal Ig}(\mathbf k)
    =
    \frac{\mathbf k\otimes\mathbf k}{|\mathbf k|^2}
    \widehat{\mathbf u}(\mathbf k).
\end{equation*}
Hence $\mathcal Ig=\mathbb P_\nabla\mathbf u$.

The multiplier $\mathbf k\otimes\mathbf k/|\mathbf k|^2$ is an orthogonal
projection, so
$|\widehat{\mathcal Ig}|\le|\widehat{\mathbf u}|$ and hence
$\|\mathcal Ig\|_{H^s}\le\|\mathbf u\|_{H^s}$.
Moreover,
$\mathbf P_\nabla(J\mathbf k)=J\mathbf P_\nabla(\mathbf k)J$,
so the reflection symmetry is preserved.
Thus $\mathcal Ig\in H_e^s$.

Finally,
\begin{equation*}
    \widehat{-\nabla\!\cdot\mathcal Ig}
    =
    -i\mathbf k\cdot
    \frac{i\mathbf k}{|\mathbf k|^2}\widehat h
    =
    \widehat h.
\end{equation*}
Thus $-\nabla\!\cdot\mathcal Ig=h$.
\eqref{eq:flux_ball_representation} then gives
$\mathcal M(\mathcal Ig)=\mathcal M\mathbf u=g$.
\end{proof}

The canonical reconstruction also determines the full-data ambiguity.
\begin{corollary}
\label{cor:full_data_identifiability}
For $\mathbf u_1,\mathbf u_2\in H_e^1$,
\begin{equation}
    \mathcal M\mathbf u_1=\mathcal M\mathbf u_2
    \quad\Longleftrightarrow\quad
    \nabla\!\cdot(\mathbf u_1-\mathbf u_2)=0.
    \label{eq:full_data_ambiguity}
\end{equation}
\end{corollary}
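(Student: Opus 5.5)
The two implications are proved separately; both reduce to statements about the scalar $h=-\nabla\!\cdot\mathbf u$, by linearity of $\mathcal M$.

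For the direction ``$\Leftarrow$'', set $\mathbf w:=\mathbf u_1-\mathbf u_2\in H_e^1$. Definition \eqref{eq:flux_ball_representation} shows that $\mathcal M$ is linear and depends on its argument only through the divergence. If $\nabla\!\cdot\mathbf w=0$ almost everywhere, then $\mathcal M\mathbf w(\mathbf x',r)=0$ for every $(\mathbf x',r)$, and hence $\mathcal M\mathbf u_1=\mathcal M\mathbf u_2$. This is the observation already made in \cref{subsec:divergence_reduction_gauge}.

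For the direction ``$\Rightarrow$'', again set $\mathbf w=\mathbf u_1-\mathbf u_2$. Then $\mathcal M\mathbf w=\mathcal M\mathbf u_1-\mathcal M\mathbf u_2=0$ as an element of $L^2_{\mathrm{loc}}(\mathbb R^2\times(0,\infty))$. The plan is to run the canonical reconstruction on this zero data.

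\begin{enumerate}
\item Since $g=0$, the preprocessed data $\gamma$ of \eqref{eq:preprocessing_operator} vanish. Here $\partial_r(r^4 g)$ is taken as a weak derivative of the zero distribution, so it is zero.
\item Consequently $\gamma^o=0$ in $\mathcal S'(\mathbb R^3)$, and therefore $\Theta_\gamma=0$.
\item \Cref{prop:fourier_sine_representation} applies to $\mathbf w\in H_e^1$ with $h=-\nabla\!\cdot\mathbf w$. Identity \eqref{eq:cone_factor_identity} then gives
\[
\widehat h(\mathbf k)=2|k_3|\Theta_\gamma(\mathbf k',|\mathbf k|)=0 \quad \text{for a.e.\ } \mathbf k.
\]
\item Since $h\in L^2$ and the Fourier transform is injective on $L^2$, we conclude $h=0$, that is, $\nabla\!\cdot(\mathbf u_1-\mathbf u_2)=0$.
\end{enumerate}

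Alternatively, one can invoke \cref{thm:canonical_reconstruction} with $s=1$. Since $\mathcal I$ is defined from the data alone, $\mathbb P_\nabla\mathbf u_1=\mathcal I(\mathcal M\mathbf u_1)=\mathcal I(\mathcal M\mathbf u_2)=\mathbb P_\nabla\mathbf u_2$. Taking divergences via \eqref{eq:helmholtz_projection} gives the same conclusion.

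The only point needing care is the a.e.\ bookkeeping. Equality of $\mathcal M\mathbf u_1$ and $\mathcal M\mathbf u_2$ in $L^2_{\mathrm{loc}}$ must pass correctly through the weak radial derivative and the odd extension, so that $\gamma^o$ vanishes as a tempered distribution and not merely pointwise. A related subtlety is the composition $\Theta_\gamma(\mathbf k',|\mathbf k|)$: \eqref{eq:cone_factor_identity} already holds a.e.\ for any model data, so applying it to the difference field $\mathbf w$ is legitimate. This step is routine, since every operation involved is linear and continuous into $\mathcal S'$.
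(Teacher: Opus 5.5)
Your proposal is correct and essentially matches the paper's proof. The paper obtains ``$\Leftarrow$'' from the ball formula \eqref{eq:flux_ball_representation}, and ``$\Rightarrow$'' by exactly your alternative argument via \cref{thm:canonical_reconstruction}; your main route simply unpacks that theorem by applying \eqref{eq:cone_factor_identity} to the difference field, which is equally valid by linearity of $\mathcal M$.
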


\begin{proof}
If $\mathcal M\mathbf u_1=\mathcal M\mathbf u_2=:g$, then
\cref{thm:canonical_reconstruction} gives
\begin{equation*}
    -\nabla\!\cdot\mathbf u_1
    =
    -\nabla\!\cdot\mathcal Ig
    =
    -\nabla\!\cdot\mathbf u_2.
\end{equation*}
The converse follows directly from
\eqref{eq:flux_ball_representation}.
\end{proof}

Hence the fields producing a fixed datum $g$ differ from $\mathcal Ig$ by a divergence-free field. The next corollary identifies $\mathcal Ig$ as the minimum-norm representative of this class.
\begin{corollary}
\label{cor:minimum_norm_reconstruction}
Let $s\ge1$ and $g\in\mathcal M(H_e^s)$. Then $\mathcal Ig$ is the unique
minimizer of the $H^s$ norm among all fields with divergence $-h_g$, and
also among all fields in $H_e^s$ producing the data $g$. Equivalently,
\begin{equation}
    \|\mathcal Ig\|_{H^s}
    =
    \min_{\substack{
        \mathbf v\in H^s(\mathbb R^3;\mathbb R^3)\\
        -\nabla\!\cdot\mathbf v=h_g}}
    \|\mathbf v\|_{H^s}
    =
    \min_{\substack{
        \mathbf v\in H_e^s\\
        \mathcal M\mathbf v=g}}
    \|\mathbf v\|_{H^s}.
    \label{eq:minimum_norm_reconstruction}
\end{equation}
\end{corollary}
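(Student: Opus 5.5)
Since $g\in\mathcal M(H_e^s)$, we fix some $\mathbf u\in H_e^s$ with $\mathcal M\mathbf u=g$. By \cref{thm:canonical_reconstruction}, $h_g=h=-\nabla\!\cdot\mathbf u$ and $\mathcal Ig=\mathbb P_\nabla\mathbf u\in H_e^s$. The plan is a Fourier-side orthogonality argument. We take any $\mathbf v\in H^s(\mathbb R^3;\mathbb R^3)$ with $-\nabla\!\cdot\mathbf v=h_g$ and write $\mathbf w:=\mathbf v-\mathcal Ig$. Then $\mathbf k\cdot\widehat{\mathbf w}(\mathbf k)=0$ for a.e. $\mathbf k$, because both fields have the same divergence, which is $i\mathbf k\cdot\widehat{\,\cdot\,}$ of each. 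On the other hand, $\widehat{\mathcal Ig}(\mathbf k)=\frac{i\mathbf k}{|\mathbf k|^2}\widehat{h_g}(\mathbf k)$ is parallel to $\mathbf k$. Hence $\widehat{\mathcal Ig}(\mathbf k)\cdot\overline{\widehat{\mathbf w}(\mathbf k)}=0$ pointwise a.e., and the two vectors are orthogonal in $\mathbb C^3$. Multiplying by the weight $(1+|\mathbf k|^2)^s$ and integrating gives the Pythagorean identity
\begin{equation*}
\|\mathbf v\|_{H^s}^2=\|\mathcal Ig\|_{H^s}^2+\|\mathbf w\|_{H^s}^2 .
\end{equation*}
Thus $\|\mathbf v\|_{H^s}\ge\|\mathcal Ig\|_{H^s}$, with equality iff $\mathbf w=0$. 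Since $\mathcal Ig$ itself is admissible, this proves the first minimum in \eqref{eq:minimum_norm_reconstruction} together with uniqueness of the minimizer.

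For the second minimization, we use \cref{cor:full_data_identifiability}. A field $\mathbf v\in H_e^s\subset H_e^1$ satisfies $\mathcal M\mathbf v=g=\mathcal M\mathbf u$ iff $\nabla\!\cdot\mathbf v=\nabla\!\cdot\mathbf u$, that is, iff $-\nabla\!\cdot\mathbf v=h_g$. So the second admissible set is exactly the first set intersected with $H_e^s$. It contains $\mathcal Ig$, since $\mathcal Ig\in H_e^s$ by \cref{thm:canonical_reconstruction}, and $\mathcal M(\mathcal Ig)=g$ by \eqref{eq:reconstruction_data_consistency}. A unique minimizer over the larger set that lies in the smaller set is therefore also the unique minimizer over the smaller set, and the two minima coincide.

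The only delicate point is the a.e. Fourier bookkeeping. We must make sure that $h_g$ is well defined as the divergence of an $H^s$ field, with $\widehat{h_g}=-i\mathbf k\cdot\widehat{\mathbf u}$ by \eqref{eq:cone_factor_identity}. We must also check that the set $\{\mathbf k=0\}$, where the multiplier in \eqref{eq:vector_synthesis_spectrum} is undefined, has measure zero and so does not affect the integrals. Otherwise the argument is the standard fact that the Helmholtz projection is orthogonal in every $H^s$. This holds because $H^s$ is defined by a radial Fourier weight and the multiplier $\mathbf k\otimes\mathbf k/|\mathbf k|^2$ is a pointwise orthogonal projection.
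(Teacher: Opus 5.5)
Your proposal is correct and follows essentially the same route as the paper. The Fourier-side orthogonality between $\widehat{\mathcal Ig}\parallel\mathbf k$ and the divergence-free difference $\mathbf w=\mathbf v-\mathcal Ig$ gives the Pythagorean identity in $H^s$. Then \cref{cor:full_data_identifiability}, together with the fact that $\mathcal Ig\in H_e^s$ reproduces $g$, shows that the second feasible set is the first one intersected with $H_e^s$. The only difference is cosmetic: you apply identifiability against an auxiliary preimage $\mathbf u$ rather than against $\mathcal Ig$ itself.
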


\begin{proof}
By \cref{thm:canonical_reconstruction},
$\mathcal Ig\in H_e^s$ satisfies
$-\nabla\!\cdot\mathcal Ig=h_g$ and
$\mathcal M(\mathcal Ig)=g$.
Thus it belongs to both feasible sets in
\eqref{eq:minimum_norm_reconstruction}.

Let $\mathbf v\in H^s(\mathbb R^3;\mathbb R^3)$ satisfy
$-\nabla\!\cdot\mathbf v=h_g$, and set
$\mathbf w=\mathbf v-\mathcal Ig$.
Then $\nabla\!\cdot\mathbf w=0$, so
$\mathbf k\cdot\widehat{\mathbf w}(\mathbf k)=0$ a.e.
By \eqref{eq:vector_synthesis_spectrum},
$\widehat{\mathcal Ig}$ is parallel to $\mathbf k$.
Hence $\mathbf w$ is $H^s$-orthogonal to $\mathcal Ig$, and
\begin{equation*}
    \|\mathbf v\|_{H^s}^2
    =
    \|\mathcal Ig\|_{H^s}^2
    +
    \|\mathbf v-\mathcal Ig\|_{H^s}^2.
\end{equation*}
Thus $\mathcal Ig$ is the unique minimizer under the divergence constraint.

For $\mathbf v\in H_e^s$,
\cref{cor:full_data_identifiability} and
\eqref{eq:reconstruction_data_consistency} give
\begin{equation*}
    \mathcal M\mathbf v=g
    \quad\Longleftrightarrow\quad
    \nabla\!\cdot(\mathbf v-\mathcal Ig)=0
    \quad\Longleftrightarrow\quad
    -\nabla\!\cdot\mathbf v=h_g.
\end{equation*}
Hence the two feasible sets have the same minimum-norm representative.
\end{proof}

\begin{remark}
\label{rem:wave_spectral_support}
The Fourier--sine reconstruction has a wave-equation interpretation.
For $h\in\mathcal S(\mathbb R^3)$ even in $x_3$, let $w$ solve
\begin{equation*}
    w_{rr}=\Delta_{\mathbf x}w,\qquad
    w(\mathbf x,0)=0,\qquad w_r(\mathbf x,0)=h(\mathbf x).
\end{equation*}
Kirchhoff's formula \cite{John1981PlaneWaves} gives
\begin{equation*}
    w(\mathbf x,r)
    =\frac{1}{4\pi r}\int_{\partial B_{\mathbb R^3}(\mathbf x,r)}h(\mathbf y)\,dS(\mathbf y)
    \qquad(r>0).
\end{equation*}
At the wall, this is $w((\mathbf x',0),r)=r\mathcal Rh(\mathbf x',r)
=\gamma(\mathbf x',r)$. Since $w$ is odd in $r$, its wall trace for all
real $r$ is $\gamma^o$.

The spatial Fourier transform of the solution is
\begin{equation*}
    \widehat w(\mathbf k,r)
    =\frac{\sin(r|\mathbf k|)}{|\mathbf k|}\widehat h(\mathbf k).
\end{equation*}
Thus the radial frequencies satisfy $\rho^2=|\mathbf k'|^2+k_3^2$.
Taking the wall trace retains $(\mathbf k',\rho)$ and integrates over
$k_3$, yielding the cone support $|\rho|\ge|\mathbf k'|$. On $\Lambda$,
the Stolt change recovers the normal wavenumber magnitude $|k_3|=q$.
The derivative $w_r$ has initial displacement $h$ and zero initial velocity,
as in planar thermoacoustic tomography~\cite{XuFengWang2002Planar}.
\end{remark}

\subsection{Isometry and the exact transformed range}
\label{subsec:weighted_energy_range}

We first derive an energy identity for the reconstruction. It determines
a natural norm on the transformed data, which is then shown to describe
the exact transformed range.

\begin{proposition}
\label{prop:weighted_energy}
Let $s\ge0$ and let $\mathbf u\in H_e^1$, with
$g=\mathcal M\mathbf u$ and $h=-\nabla\!\cdot\mathbf u$.
Then, allowing either side to take the value $+\infty$,
\begin{align}
    \|\mathcal Ig\|_{H^s}^2
    &=
    8\int_\Lambda
    (1+\rho^2)^s\frac{q}{\rho}
    |\Theta_\gamma(\mathbf k',\rho)|^2
    \,d\mathbf k'\,d\rho
    \label{eq:weighted_exact_energy}\\
    &=
    \int_{\mathbb R^3}
    \frac{(1+|\mathbf k|^2)^s}{|\mathbf k|^2}
    |\widehat h(\mathbf k)|^2\,d\mathbf k.
    \label{eq:field_spectral_norm}
\end{align}
The same identities hold for differences of two exact data sets.
\end{proposition}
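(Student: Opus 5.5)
Both identities come from Fourier-side computations combined with the cone relation in \cref{prop:fourier_sine_representation}. I will prove the second equality \eqref{eq:field_spectral_norm} first, then convert it to the $\Lambda$-integral \eqref{eq:weighted_exact_energy} by a Stolt change of variables.

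\emph{Step 1: the field norm.} By \eqref{eq:cone_factor_identity}, $\widehat{h_g}=\widehat h$ a.e. Then \eqref{eq:vector_synthesis_spectrum} gives
\[
|\widehat{\mathcal Ig}(\mathbf k)|^2=\frac{|\widehat h(\mathbf k)|^2}{|\mathbf k|^2}
\]
for $\mathbf k\neq0$. Multiplying by $(1+|\mathbf k|^2)^s$ and integrating gives \eqref{eq:field_spectral_norm} as an identity in $[0,\infty]$. Here $\mathcal Ig$ is read as the tempered object with this Fourier representative, so both sides are simultaneously finite or infinite. This step needs only $\mathbf u\in H_e^1$, since $\widehat h$ is an $L^2$ function.

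\emph{Step 2: the change of variables.} Write the right side of \eqref{eq:field_spectral_norm} in coordinates $(\mathbf k',k_3)$. Because $\widehat h$ is even in $k_3$ (as $h$ is even in $x_3$), it equals twice the integral over $k_3>0$. On $\{k_3>0\}$ substitute $\rho=|\mathbf k|=\sqrt{|\mathbf k'|^2+k_3^2}$ with $\mathbf k'$ fixed. This map is a bijection onto $\Lambda$, with inverse $k_3=q(\mathbf k',\rho)$ and $dk_3=(\rho/q)\,d\rho$. The integral becomes
\[
2\int_\Lambda\frac{(1+\rho^2)^s}{\rho^2}\,|\widehat h(\mathbf k',q)|^2\,\frac{\rho}{q}\,d\mathbf k'\,d\rho .
\]
Now insert \eqref{eq:fourier_sine_density}, that is, $|\widehat h(\mathbf k',q)|^2=4q^2|\Theta_\gamma|^2$ a.e. on $\Lambda$. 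The integrand becomes $8(1+\rho^2)^s(q/\rho)|\Theta_\gamma|^2$, which is \eqref{eq:weighted_exact_energy}.

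\emph{Main obstacle.} The delicate point is justifying the substitution and the a.e. identities when the integrals may diverge. I would use Tonelli's theorem for nonnegative measurable integrands, so the substitution holds in $[0,\infty]$ without any integrability assumption. The one-dimensional change of variables is a $C^1$ diffeomorphism for each fixed $\mathbf k'\neq0$. Null sets are transported correctly because the map $(\mathbf k',k_3)\mapsto(\mathbf k',\rho)$ is a locally bi-Lipschitz diffeomorphism away from the null set $\{k_3=0\}$. Hence "a.e. on $\Lambda$" in \eqref{eq:fourier_sine_density} corresponds to "a.e. in $\mathbf k$."

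\emph{Differences of data.} Everything is linear in $\mathbf u$: $\mathcal M$, $\gamma$, $\Theta_\gamma$ and $\mathcal I$ are all linear. Applying the argument to $\mathbf u_1-\mathbf u_2\in H_e^1$ gives the same identities for differences of two exact data sets.
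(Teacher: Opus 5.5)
Your proof is correct and uses the same ingredients as the paper's: the cone relation of \cref{prop:fourier_sine_representation}, evenness of $\widehat h$ in $k_3$, and the substitution $\rho=|\mathbf k|$ on $\{k_3>0\}$ with $dk_3=(\rho/q)\,d\rho$. The paper only reverses the order, first writing $|\widehat{\mathcal Ig}(\mathbf k)|^2=4k_3^2|\mathbf k|^{-2}|\Theta_\gamma(\mathbf k',|\mathbf k|)|^2$ directly from the synthesis formulas and changing variables to get \eqref{eq:weighted_exact_energy}, then invoking \eqref{eq:cone_factor_identity} for \eqref{eq:field_spectral_norm}; that ordering shows the first identity depends only on the synthesis map, which is what is reused for $\mathbf v_F$ in \cref{thm:exact_transformed_range}.
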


\begin{proof}
By \cref{thm:canonical_reconstruction} with $s=1$,
$\mathcal Ig$ is well defined. From
\eqref{eq:vector_synthesis_spectrum},
\begin{equation*}
    |\widehat{\mathcal Ig}(\mathbf k)|^2
    =
    \frac{4k_3^2}{|\mathbf k|^2}
    |\Theta_\gamma(\mathbf k',|\mathbf k|)|^2.
\end{equation*}
On $k_3>0$, use the change of variables $\rho=(|\mathbf k'|^2+k_3^2)^{1/2}$. Then $q(\mathbf k',\rho)=k_3$ and $dk_3=\rho q(\mathbf k',\rho)^{-1}d\rho$, with $\rho>|\mathbf k'|$.
The contribution from $k_3<0$ is
identical, and therefore
\begin{align*}
    \|\mathcal Ig\|_{H^s}^2
    &=
    2\int_{\mathbb R^2}
    \int_{|\mathbf k'|}^\infty
    (1+\rho^2)^s
    \frac{4q^2}{\rho^2}
    |\Theta_\gamma(\mathbf k',\rho)|^2
    \frac{\rho}{q}\,d\rho\,d\mathbf k'\\
    &=
    8\int_\Lambda
    (1+\rho^2)^s\frac q\rho
    |\Theta_\gamma(\mathbf k',\rho)|^2
    \,d\mathbf k'\,d\rho.
\end{align*}
which proves \eqref{eq:weighted_exact_energy}.

Using \eqref{eq:cone_factor_identity} in
\eqref{eq:vector_synthesis_spectrum} gives
\begin{equation*}
    |\widehat{\mathcal Ig}(\mathbf k)|^2
    =
    \frac{|\widehat h(\mathbf k)|^2}{|\mathbf k|^2},
\end{equation*}
and hence \eqref{eq:field_spectral_norm}.
The difference identities follow by linearity.
\end{proof}

Motivated by \eqref{eq:weighted_exact_energy}, let $Y^s$ be the space of
almost-everywhere equivalence classes of measurable functions
$F:\Lambda\to\mathbb C$ satisfying
$F(-\mathbf k',\rho)=\overline{F(\mathbf k',\rho)}$ and
\begin{equation}
    \|F\|_{Y^s}^2
    :=
    8\int_\Lambda
    (1+\rho^2)^s\frac q\rho
    |F(\mathbf k',\rho)|^2\,d\mathbf k'\,d\rho
    <\infty.
    \label{eq:weighted_data_norm}
\end{equation}
With the real part of the corresponding weighted $L^2$ inner product,
$Y^s$ is a real Hilbert space.

For $s\ge1$, set
\begin{equation*}
    X^s:=\mathcal M(H_e^s),
    \qquad
    \|g\|_{X^s}:=\|\mathcal Ig\|_{H^s}.
\end{equation*}
By \cref{thm:canonical_reconstruction,prop:weighted_energy},
$g\mapsto\Theta_\gamma$ is a linear isometric embedding of $X^s$ into
$Y^s$. The following theorem shows that it is onto.

\begin{theorem}
\label{thm:exact_transformed_range}
Let $s\ge1$. The map $g\mapsto\Theta_\gamma$ is a linear isometric
isomorphism from $X^s$ onto $Y^s$.
For $F\in Y^s$, define $\mathbf v_F$ by
\begin{equation}
    \widehat{\mathbf v_F}(\mathbf k)
    :=
    \frac{2i\mathbf k|k_3|}{|\mathbf k|^2}
    F(\mathbf k',|\mathbf k|),
    \qquad k_3\ne0.
    \label{eq:range_synthesis_spectrum}
\end{equation}
Then $\mathbf v_F\in H_e^s$ and the inverse map is
$F\mapsto\mathcal M\mathbf v_F$.
In particular, $X^s$ is a real Hilbert space and
\begin{equation*}
    X^s
    =
    \left\{
        g\in\mathcal M(H_e^1):
        \mathcal Ig\in H^s
    \right\}.
\end{equation*}
\end{theorem}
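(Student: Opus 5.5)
Linearity and the isometric embedding $X^s\to Y^s$ are already in hand from \cref{thm:canonical_reconstruction,prop:weighted_energy}, which also give injectivity: if $\Theta_\gamma=0$ then $\mathcal Ig=0$, hence $h=0$ and $g=0$ by \eqref{eq:flux_ball_representation}. So the plan is to prove surjectivity by showing that for every $F\in Y^s$ the field $\mathbf v_F$ lies in $H_e^s$ and satisfies $\Theta_{\gamma_F}=F$, where $\gamma_F$ is the preprocessing of $g_F:=\mathcal M\mathbf v_F$. The inverse formula $F\mapsto\mathcal M\mathbf v_F$ is then immediate.

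\emph{Step 1: $\mathbf v_F\in H^s$ with $\|\mathbf v_F\|_{H^s}=\|F\|_{Y^s}$.} From \eqref{eq:range_synthesis_spectrum}, $|\widehat{\mathbf v_F}(\mathbf k)|^2=4k_3^2|\mathbf k|^{-2}|F(\mathbf k',|\mathbf k|)|^2$. I will repeat verbatim the change of variables $\rho=|\mathbf k|$ on each half $\pm k_3>0$ from the proof of \cref{prop:weighted_energy}; this turns $\int(1+|\mathbf k|^2)^s|\widehat{\mathbf v_F}|^2\,d\mathbf k$ into $8\int_\Lambda(1+\rho^2)^s(q/\rho)|F|^2$, which equals $\|F\|_{Y^s}^2<\infty$. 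In particular $\widehat{\mathbf v_F}$ is square integrable with weight, so it defines a tempered distribution. \emph{Realness and symmetry:} the condition $F(-\mathbf k',\rho)=\overline{F(\mathbf k',\rho)}$ gives $\widehat{\mathbf v_F}(-\mathbf k)=\overline{\widehat{\mathbf v_F}(\mathbf k)}$, so $\mathbf v_F$ is real. The factor $|k_3|F(\mathbf k',|\mathbf k|)$ is even in $k_3$, and $i\mathbf k\mapsto iJ\mathbf k$ under $k_3\mapsto -k_3$, so $\widehat{\mathbf v_F}(J\mathbf k)=J\widehat{\mathbf v_F}(\mathbf k)$. This is equivalent to $\mathbf v_F\circ\sigma=J\mathbf v_F$, hence $\mathbf v_F\in H_e^s\subset H_e^1$ since $s\ge1$.

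\emph{Step 2: identify the transformed data.} Set $h_F:=-\nabla\!\cdot\mathbf v_F$, so that
\[
\widehat{h_F}(\mathbf k)=-i\mathbf k\cdot\widehat{\mathbf v_F}(\mathbf k)=2|k_3|F(\mathbf k',|\mathbf k|).
\]
Applying \cref{prop:fourier_sine_representation} to $\mathbf v_F\in H_e^1$ gives, via \eqref{eq:fourier_sine_density}, $\Theta_{\gamma_F}(\mathbf k',\rho)=\widehat{h_F}(\mathbf k',q)/(2q)$ for a.e.\ $(\mathbf k',\rho)\in\Lambda$. Evaluating $\widehat{h_F}$ at $(\mathbf k',q)$ gives $|\mathbf k|=\rho$, so $\widehat{h_F}(\mathbf k',q)=2qF(\mathbf k',\rho)$ and therefore $\Theta_{\gamma_F}=F$ a.e. The only care needed is that an a.e.\ identity in $\mathbf k$ transfers to an a.e.\ identity on $\Lambda$. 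This holds because the map $(\mathbf k',k_3)\mapsto(\mathbf k',|\mathbf k|)$ restricted to $k_3>0$ is a diffeomorphism onto $\Lambda$, so it preserves null sets. Surjectivity follows, and the isometry makes $X^s$ complete, i.e.\ a real Hilbert space.

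\emph{Step 3: the set identity.} The inclusion $\subseteq$ holds because $X^s=\mathcal M(H_e^s)\subset\mathcal M(H_e^1)$ and $\mathcal Ig=\mathbb P_\nabla\mathbf u\in H^s$ by \cref{thm:canonical_reconstruction}. For $\supseteq$, take $g=\mathcal M\mathbf u$ with $\mathbf u\in H_e^1$ and $\mathcal Ig\in H^s$. By \cref{thm:canonical_reconstruction} with $s=1$, $\mathcal Ig\in H_e^1$ and $\mathcal M(\mathcal Ig)=g$; together with $\mathcal Ig\in H^s$ this gives $\mathcal Ig\in H_e^s$, so $g\in X^s$. The main obstacle is essentially bookkeeping: checking that the Fourier-side symmetry and realness conditions translate correctly, and justifying the null-set transfer in Step 2. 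Everything else is a reuse of the computations in \cref{prop:weighted_energy}.
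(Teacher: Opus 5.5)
Your proposal is correct and follows essentially the same route as the paper: the isometric embedding comes from \cref{thm:canonical_reconstruction,prop:weighted_energy}, and surjectivity comes from synthesizing $\mathbf v_F$. You then check $\|\mathbf v_F\|_{H^s}=\|F\|_{Y^s}$ by the same change of variables, verify realness and the mirror symmetry, obtain $\Theta_{\gamma_F}=F$ from \cref{prop:fourier_sine_representation}, and prove the set identity by the same two inclusions. Your explicit injectivity argument through $h=0$ and your remark that $(\mathbf k',k_3)\mapsto(\mathbf k',|\mathbf k|)$ preserves null sets only spell out points the paper leaves implicit.
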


\begin{proof}
By \cref{thm:canonical_reconstruction,prop:weighted_energy},
the map
\begin{equation*}
    g\longmapsto\Theta_\gamma
\end{equation*}
is linear and isometric from $X^s$ into $Y^s$.
It remains to prove surjectivity.

Let $F\in Y^s$, and define $\mathbf v_F$ by
\eqref{eq:range_synthesis_spectrum}.
Using the same change of variables as in
\cref{prop:weighted_energy}, we obtain
\begin{equation*}
    \|\mathbf v_F\|_{H^s}^2
    =
    8\int_\Lambda
    (1+\rho^2)^s\frac{q}{\rho}
    |F(\mathbf k',\rho)|^2
    \,d\mathbf k'\,d\rho
    =
    \|F\|_{Y^s}^2.
\end{equation*}
Hence $\mathbf v_F\in H^s(\mathbb R^3;\mathbb R^3)$.

The Hermitian condition on $F$ gives
$\widehat{\mathbf v_F}(-\mathbf k)
=\overline{\widehat{\mathbf v_F}(\mathbf k)}$, while
\eqref{eq:range_synthesis_spectrum} gives
$\widehat{\mathbf v_F}(J\mathbf k)
=J\widehat{\mathbf v_F}(\mathbf k)$.
Therefore $\mathbf v_F$ is real valued and belongs to $H_e^s$.

Since $s\ge1$, the function
$h_F:=-\nabla\!\cdot\mathbf v_F$ belongs to $L^2(\mathbb R^3)$.
Its Fourier transform is
\begin{equation*}
    \widehat h_F(\mathbf k)
    =
    2|k_3|F(\mathbf k',|\mathbf k|).
\end{equation*}
Set $g_F:=\mathcal M\mathbf v_F$.
For $(\mathbf k',\rho)\in\Lambda$, let
$q=q(\mathbf k',\rho)$. Since
$\sqrt{|\mathbf k'|^2+q^2}=\rho$,
we have
$\widehat h_F(\mathbf k',q)=2qF(\mathbf k',\rho)$.
Applying \cref{prop:fourier_sine_representation} to $g_F$ therefore gives
\begin{equation*}
    \Theta_{\gamma_F}(\mathbf k',\rho)
    =
    \frac{\widehat h_F(\mathbf k',q)}
         {2q}
    =
    F(\mathbf k',\rho)
\end{equation*}
for almost every $(\mathbf k',\rho)\in\Lambda$.
Thus $g_F\mapsto F$, and the map
$g\mapsto\Theta_\gamma$ is onto $Y^s$.
Since it is also isometric, it is injective, and its inverse is
$F\mapsto\mathcal M\mathbf v_F$.

Because $Y^s$ is complete, the isometric isomorphism implies that
$X^s$ is a real Hilbert space.

Finally, let $g\in\mathcal M(H_e^1)$.
If $\mathcal Ig\in H^s$, then
\cref{thm:canonical_reconstruction} with $s=1$ gives
$\mathcal Ig\in H_e^1$.
Together with $\mathcal Ig\in H^s$, this implies
$\mathcal Ig\in H_e^s$.
By data consistency,
\begin{equation*}
    g=\mathcal M(\mathcal Ig)\in\mathcal M(H_e^s)=X^s.
\end{equation*}
Conversely, if $g\in X^s$, then
$g=\mathcal M\mathbf u$ for some $\mathbf u\in H_e^s$, and
\cref{thm:canonical_reconstruction} gives
$\mathcal Ig=\mathbb P_\nabla\mathbf u\in H_e^s$.
Hence $\mathcal Ig\in H^s$, proving
\begin{equation*}
    X^s
    =
    \left\{
        g\in\mathcal M(H_e^1):
        \mathcal Ig\in H^s
    \right\}.
\end{equation*}
\end{proof}

\subsection{Standard Sobolev data norms and depth cancellation}
\label{subsec:data_norms_cancellation}

The norm in $Y^s$ is adapted to reconstruction. The standard Sobolev norm
of the preprocessed data carries an additional singular weight near the
cone boundary. We first identify this weight and then examine how depth
cancellation and angular separation affect it.

\begin{proposition}
\label{prop:standard_data_comparison}
Let $s\ge0$ and $\mathbf u\in H_e^1$, with
$g=\mathcal M\mathbf u$ and $h=-\nabla\!\cdot\mathbf u$.
Then, in the extended sense,
\begin{equation}
    \|\gamma^o\|_{H^s}^2
    =
    \int_{\mathbb R^3}
    \frac{(1+|\mathbf k'|^2+|\mathbf k|^2)^s}
         {|k_3||\mathbf k|}
    |\widehat h(\mathbf k)|^2\,d\mathbf k.
    \label{eq:data_spectral_norm}
\end{equation}
Hence $\gamma^o\in H^s$ if and only if the integral is finite, and in
that case
\[
    \|\mathcal Ig\|_{H^s}\le\|\gamma^o\|_{H^s}.
\]
\end{proposition}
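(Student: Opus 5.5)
We compute $\|\gamma^o\|_{H^s}^2=\int_{\mathbb R^3}(1+|\mathbf k'|^2+\rho^2)^s|\widehat{\gamma^o}(\mathbf k',\rho)|^2\,d\mathbf k'\,d\rho$ directly from the explicit representation \eqref{eq:full_odd_cone_representation} of \cref{prop:fourier_sine_representation}. Here the Fourier variable dual to $(\mathbf x',r)$ is $(\mathbf k',\rho)$, so the Sobolev weight is $(1+|\mathbf k'|^2+\rho^2)^s$. Since $\widehat{\gamma^o}$ is a locally integrable function vanishing for $|\rho|\le|\mathbf k'|$, the $H^s$ norm (finite or not) is this weighted integral of its square. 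Because $|\widehat{\gamma^o}(\mathbf k',\rho)|^2=|\widehat h(\mathbf k',q)|^2/q^2$ is even in $\rho$, the integral equals twice the integral over $\Lambda$.

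On $\Lambda$, we perform the Stolt change of variables used in \cref{prop:weighted_energy}, in reverse: set $k_3=q(\mathbf k',\rho)>0$, so $\rho=|\mathbf k|$ and $d\rho=(k_3/|\mathbf k|)\,dk_3$. This gives
\begin{equation*}
2\int_{\mathbb R^2}\int_0^\infty(1+|\mathbf k'|^2+|\mathbf k|^2)^s\frac{|\widehat h(\mathbf k',k_3)|^2}{k_3^2}\frac{k_3}{|\mathbf k|}\,dk_3\,d\mathbf k'.
\end{equation*}
Since $h$ is even in $x_3$, $\widehat h$ is even in $k_3$. Hence the factor $2$ is absorbed by extending the integral to all $k_3\ne0$ with weight $1/(|k_3||\mathbf k|)$, which is \eqref{eq:data_spectral_norm}. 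All integrands are nonnegative, so Tonelli justifies each step in the extended sense. The equivalence ``$\gamma^o\in H^s$ iff the integral is finite'' then holds because $\gamma^o$ is already a tempered distribution whose Fourier transform is the given function.

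For the inequality, we compare integrands with \eqref{eq:field_spectral_norm}. It suffices to show pointwise that
\begin{equation*}
\frac{(1+|\mathbf k|^2)^s}{|\mathbf k|^2}\le\frac{(1+|\mathbf k'|^2+|\mathbf k|^2)^s}{|k_3||\mathbf k|}.
\end{equation*}
This holds because $(1+|\mathbf k|^2)^s\le(1+|\mathbf k'|^2+|\mathbf k|^2)^s$ for $s\ge0$, and $|k_3||\mathbf k|\le|\mathbf k|^2$.

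The main technical point is the first step: justifying that the distributional $H^s$ norm of $\gamma^o$ equals the weighted $L^2$ norm of the a.e. representative. This follows because \cref{prop:fourier_sine_representation} identifies $\widehat{\gamma^o}$ as a locally integrable function, and by definition $H^s$ membership is finiteness of that integral. Measurability of the change of variables on the cone is routine.
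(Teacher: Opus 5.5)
Your proof is correct and follows the paper's argument. The paper likewise writes $\|\gamma^o\|_{H^s}^2$ as twice the integral over $\Lambda$ (via $\widehat{\gamma^o}=-2i\Theta_\gamma$ and \eqref{eq:fourier_sine_density}), makes the same change of variables $k_3=q$ with $d\rho=(k_3/\rho)\,dk_3$, and uses the evenness of $\widehat h$ in $k_3$. For the inequality, the paper compares the weights on $\Lambda$ against \eqref{eq:weighted_exact_energy} rather than on $\mathbb R^3$ against \eqref{eq:field_spectral_norm}; its ratio $\bigl((1+|\mathbf k'|^2+\rho^2)/(1+\rho^2)\bigr)^s\rho/q$ is your pointwise comparison transported by the Stolt map.
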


\begin{proof}
By \cref{prop:fourier_sine_representation} and
$\widehat{\gamma^o}=-2i\Theta_\gamma$ for $\rho>0$,
\begin{equation*}
    \|\gamma^o\|_{H^s}^2
    =
    8\int_\Lambda
    (1+|\mathbf k'|^2+\rho^2)^s
    |\Theta_\gamma(\mathbf k',\rho)|^2
    \,d\mathbf k'\,d\rho.
\end{equation*}
Substituting \eqref{eq:fourier_sine_density} and, for fixed
$\mathbf k'$, changing variables from $\rho$ to
$k_3=q(\mathbf k',\rho)>0$, with
$d\rho=(k_3/\rho)\,dk_3$, gives
\eqref{eq:data_spectral_norm} after using the evenness of
$\widehat h$ in $k_3$.
Here $\rho=|\mathbf k|$, so
$1+|\mathbf k'|^2+\rho^2 =1+|\mathbf k'|^2+|\mathbf k|^2$.
Relative to \eqref{eq:weighted_exact_energy}, the weights in the two
integrals over $\Lambda$ have ratio
\begin{equation*}
    \left(\frac{1+|\mathbf k'|^2+\rho^2}{1+\rho^2}\right)^s
    \frac{\rho}{q}.
\end{equation*}
Both factors are at least one, proving the inequality.
The polynomial factor lies between $1$ and $2^s$, whereas $\rho/q$ is unbounded as $q/\rho\to0$.
\end{proof}

For Schwartz fields, standard data regularity is characterized
by a depth-cancellation condition.

\begin{proposition}
\label{prop:sobolev_cancellation_characterization}
Let $\mathbf u\in H_e^1\cap\mathcal S(\mathbb R^3;\mathbb R^3)$,
set $h=-\nabla\!\cdot\mathbf u$ and $g=\mathcal M\mathbf u$, and let
$\gamma$ be given by \eqref{eq:preprocessing_operator}. Define
$\bar h(\mathbf x'):=\int_{\mathbb R}h(\mathbf x',z)\,dz$.
For every $s\in\mathbb R$,
\begin{equation}
    \gamma^o\in H^s(\mathbb R^3)
    \quad\Longleftrightarrow\quad \bar h=0.
    \label{eq:cancellation_equivalence}
\end{equation}
Moreover, for each fixed $\mathbf k'\ne\mathbf0$,
\begin{equation}
    \Theta_\gamma(\mathbf k',\rho)
    =\frac{\widehat h(\mathbf k',0)}{2q}+O(q)
    \qquad\text{as }\rho\downarrow|\mathbf k'|.
    \label{eq:cone_edge_asymptotic}
\end{equation}
\end{proposition}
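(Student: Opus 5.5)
The plan is to work entirely from the spectral formula \eqref{eq:data_spectral_norm} of \cref{prop:standard_data_comparison}. By that formula, $\gamma^o\in H^s$ holds exactly when
\begin{equation*}
    \int_{\mathbb R^3}
    \frac{(1+|\mathbf k'|^2+|\mathbf k|^2)^s}{|k_3||\mathbf k|}
    |\widehat h(\mathbf k)|^2\,d\mathbf k<\infty.
\end{equation*}
The proof of that proposition is a change of variables that does not use $s\ge0$, so the same identity holds for all $s\in\mathbb R$. Since $\mathbf u\in\mathcal S$, also $h\in\mathcal S$ and $\widehat h\in\mathcal S$. The polynomial weight is then harmless at infinity for every $s$, and for negative $s$ it is bounded. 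The factor $1/|\mathbf k|$ is integrable near the origin in $\mathbb R^3$. So the only possible source of divergence is the factor $1/|k_3|$ near the plane $k_3=0$.

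Next, note that $\widehat h(\mathbf k',0)=\mathcal F_{\mathbf x'}\bar h(\mathbf k')$. Hence $\bar h=0$ if and only if $\widehat h(\cdot,0)\equiv0$.

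For the direction $\bar h=0\Rightarrow\gamma^o\in H^s$: because $\widehat h$ is smooth and even in $k_3$ and vanishes at $k_3=0$, we get $\widehat h(\mathbf k',k_3)=k_3^2\,m(\mathbf k',k_3)$. Here $m$ is obtained by Taylor's formula with integral remainder, and $|m|$ is dominated by the Schwartz seminorms of $\partial_{k_3}^2\widehat h$, so it decays rapidly in $\mathbf k'$. In fact the bound $|\widehat h|\le C|k_3|\langle\mathbf k'\rangle^{-N}$ already suffices for $|k_3|\le1$. Then $|\widehat h|^2/|k_3|\lesssim|k_3|\langle\mathbf k'\rangle^{-2N}$, which is integrable on $|k_3|\le1$. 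For $|k_3|\ge1$ the rapid decay of $\widehat h$ handles everything.

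For the converse: if $\widehat h(\mathbf k'_0,0)\ne0$ for some $\mathbf k'_0$, then by continuity $|\widehat h|\ge c>0$ on a neighborhood $U\times(-\delta,\delta)$. We may take $U$ away from $\mathbf k'=0$ by continuity, or simply accept that $|\mathbf k|$ is bounded on this set. On it the integrand is at least $c'/|k_3|$, which is not integrable in $k_3$. So the norm is infinite.

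For the asymptotic \eqref{eq:cone_edge_asymptotic}, use \eqref{eq:fourier_sine_density}: $\Theta_\gamma(\mathbf k',\rho)=\widehat h(\mathbf k',q)/(2q)$. Since $\widehat h$ is smooth and even in $k_3$, Taylor expansion gives $\widehat h(\mathbf k',q)=\widehat h(\mathbf k',0)+O(q^2)$. Dividing by $2q$ yields the claim.

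The main obstacle is a measure-theoretic subtlety. \eqref{eq:fourier_sine_density} is stated only almost everywhere, and the asymptotic statement needs a pointwise representative. I would take the continuous representative $\widehat h(\mathbf k',q)/(2q)$ on $\Lambda$, which is legitimate since $\widehat h$ is continuous, and interpret $\Theta_\gamma$ as that function. The other care point is the extension of \eqref{eq:data_spectral_norm} to negative $s$. I would justify it by noting that the computation in \cref{prop:standard_data_comparison} is purely a change of variables on $\Lambda$ using the representation \eqref{eq:full_odd_cone_representation}, which holds regardless of $s$.
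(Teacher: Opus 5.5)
Your proposal is correct and follows the paper's proof essentially step for step. Both arguments use the identity $\widehat h(\mathbf k',0)=\mathcal F_{\mathbf x'}\bar h(\mathbf k')$, a Taylor factorization in $k_3$ with Schwartz decay in $\mathbf k'$ for the cancelling direction, a continuity lower bound on $|\widehat h|$ near $k_3=0$ that produces a divergent $\int d\kappa/\kappa$ for the non-cancelling direction, and the even Taylor expansion divided by $2q$ for \eqref{eq:cone_edge_asymptotic}.

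The differences are only cosmetic. You extend \eqref{eq:data_spectral_norm} directly to $s<0$, which is legitimate because that change of variables never uses $s\ge0$; the paper instead proves the cancelling direction for $s\ge0$ and then uses $H^s\subset H^{s'}$ for $s'<s$. You also make explicit the choice of the continuous representative of $\Theta_\gamma$, which the paper uses tacitly.
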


\begin{proof}
Integrating in depth gives
\begin{equation}
    \widehat h(\mathbf k',0)=\mathcal F_{\mathbf x'}\bar h(\mathbf k').
    \label{eq:depth_cancellation_condition}
\end{equation}
Smoothness and evenness give
$\widehat h(\mathbf k',\kappa)=\widehat h(\mathbf k',0)+O(\kappa^2)$;
substituting $\kappa=q$ in \eqref{eq:fourier_sine_density} proves
\eqref{eq:cone_edge_asymptotic}.

If $\bar h\ne0$, \eqref{eq:depth_cancellation_condition} and continuity give
a bounded open set $V\subset\mathbb R^2$ away from the origin and
$c,\varepsilon>0$ such that
$|\widehat h(\mathbf k',\kappa)|\ge c$ on $V\times(0,\varepsilon)$.
Since $V$ is bounded, choose $M>0$ such that
$\sqrt{|\mathbf k'|^2+\varepsilon^2}\le M$ for $\mathbf k'\in V$, we obtain
\begin{equation*}
    \int_V
      \int_{|\mathbf k'|}^{\sqrt{|\mathbf k'|^2+\varepsilon^2}}
      |\Theta_\gamma(\mathbf k',\rho)|^2\,d\rho\,d\mathbf k'
    \ge\frac{c^2}{4M}
      \int_V\int_0^\varepsilon\frac{d\kappa}{\kappa}\,d\mathbf k'
      =\infty,
\end{equation*}
using $d\rho=(\kappa/\rho)\,d\kappa$.
Hence $\widehat{\gamma^o}$ is not square integrable on this bounded frequency set. Since the Sobolev weight is bounded below there, $\gamma^o\notin H^s$ for every $s\in\mathbb R$.

Conversely, if $\bar h=0$, evenness, Taylor's formula, and Schwartz bounds give
\begin{equation*}
    \widehat h(\mathbf k',k_3)=k_3^2a(\mathbf k',k_3),
    \qquad
    |a(\mathbf k',k_3)|\le C_N(1+|\mathbf k'|)^{-N}
    \quad (|k_3|\le1),
\end{equation*}
where $a$ is smooth and $N$ is arbitrary.
For $s\ge0$ and $|k_3|\le1$, the integrand in
\eqref{eq:data_spectral_norm} satisfies
\begin{equation*}
    \frac{(1+|\mathbf k'|^2+|\mathbf k|^2)^s}
         {|k_3||\mathbf k|}|\widehat h|^2
    \le C_{s,N}(1+|\mathbf k'|^2)^s
         (1+|\mathbf k'|)^{-2N}k_3^2,
\end{equation*}
because $|k_3|^3/|\mathbf k|\le k_3^2$.
The right-hand side is integrable over $\mathbb R^2\times[-1,1]$ for $N>s+1$.
On $|k_3|\ge1$, the factor $1/(|k_3||\mathbf k|)$ is bounded, and rapid
decay of $\widehat h$ makes the remaining integral finite.
By \cref{prop:standard_data_comparison},
$\gamma^o\in H^s$ for every $s\ge0$, hence also for $s<0$.
This completes \eqref{eq:cancellation_equivalence}.
\end{proof}

Depth cancellation characterizes finiteness of the standard Sobolev norm, but not its uniform comparability with the reconstruction norm. Such a comparison is recovered after excluding a neighborhood of the cone boundary.

\begin{corollary}
\label{cor:cone_interior_stability}
Let $s\ge1$ and $\mathbf u\in H_e^s$, with $g=\mathcal M\mathbf u$ and
$\gamma$ given by \eqref{eq:preprocessing_operator}.
For $0<\delta<1$, define
\begin{equation*}
    \Lambda_\delta
    :=
    \left\{
        (\mathbf k',\rho)\in\Lambda:
        q(\mathbf k',\rho)\ge\delta\rho
    \right\},
    \qquad
    \widehat{\mathcal I_\delta g}(\mathbf k)
    :=
    \mathbf1_{\{|k_3|\ge\delta|\mathbf k|\}}
    \widehat{\mathcal Ig}(\mathbf k).
\end{equation*}
With
\begin{equation}
    N_{\delta,s}(\gamma)^2
    :=8\int_{\Lambda_\delta}(1+|\mathbf k'|^2+\rho^2)^s
      |\Theta_\gamma(\mathbf k',\rho)|^2\,d\mathbf k'\,d\rho,
    \label{eq:cone_interior_data_norm}
\end{equation}
one has $N_{\delta,s}(\gamma)<\infty$ and
\begin{equation}
    \frac{\delta}{(2-\delta^2)^s}N_{\delta,s}(\gamma)^2
    \le\|\mathcal I_\delta g\|_{H^s}^2
    \le N_{\delta,s}(\gamma)^2.
    \label{eq:cone_interior_stability}
\end{equation}
The lower constant is optimal for each fixed $\delta$ and $s$.
The same inequalities hold for differences of exact data in $\mathcal M(H_e^s)$.
\end{corollary}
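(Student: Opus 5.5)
We work entirely on the Fourier side, using the same change of variables as in \cref{prop:weighted_energy}. The plan is to express both $\|\mathcal I_\delta g\|_{H^s}^2$ and $N_{\delta,s}(\gamma)^2$ as integrals over $\Lambda_\delta$ of $|\Theta_\gamma|^2$ against explicit weights, and then compare the weights pointwise.

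First, by \eqref{eq:vector_synthesis_spectrum} and \eqref{eq:scalar_synthesis_spectrum},
\[
|\widehat{\mathcal Ig}(\mathbf k)|^2=\frac{4k_3^2}{|\mathbf k|^2}|\Theta_\gamma(\mathbf k',|\mathbf k|)|^2 .
\]
Restrict to $|k_3|\ge\delta|\mathbf k|$ and substitute $\rho=|\mathbf k|$ on $k_3>0$, so that $q=k_3$ and $dk_3=(\rho/q)\,d\rho$. The condition $|k_3|\ge\delta|\mathbf k|$ becomes exactly $q\ge\delta\rho$, that is, $(\mathbf k',\rho)\in\Lambda_\delta$. The half $k_3<0$ contributes the same amount. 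This gives
\[
\|\mathcal I_\delta g\|_{H^s}^2=8\int_{\Lambda_\delta}(1+\rho^2)^s\frac{q}{\rho}|\Theta_\gamma|^2\,d\mathbf k'\,d\rho .
\]
Comparing with \eqref{eq:cone_interior_data_norm}, the ratio of the data weight to the reconstruction weight is
\[
W:=\Bigl(\frac{1+|\mathbf k'|^2+\rho^2}{1+\rho^2}\Bigr)^s\frac{\rho}{q}.
\]
It remains to show $1\le W\le(2-\delta^2)^s/\delta$ on $\Lambda_\delta$.

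The lower bound $W\ge1$ holds on all of $\Lambda$, as in \cref{prop:standard_data_comparison}. For the upper bound, $q\ge\delta\rho$ gives $\rho/q\le1/\delta$. It also gives $|\mathbf k'|^2=\rho^2-q^2\le(1-\delta^2)\rho^2$, hence
\[
\frac{1+|\mathbf k'|^2+\rho^2}{1+\rho^2}\le\frac{1+(2-\delta^2)\rho^2}{1+\rho^2}\le 2-\delta^2 ,
\]
where the last step uses $2-\delta^2\ge1$. These two bounds yield \eqref{eq:cone_interior_stability}. Finiteness of $N_{\delta,s}(\gamma)$ then follows from the upper bound on $W$ together with
\[
\|\mathcal I_\delta g\|_{H^s}\le\|\mathcal Ig\|_{H^s}=\|\mathbb P_\nabla\mathbf u\|_{H^s}\le\|\mathbf u\|_{H^s}<\infty ,
\]
which is \cref{thm:canonical_reconstruction}. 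The statement for differences of data follows by linearity of $g\mapsto\Theta_\gamma$.

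The main subtlety is optimality of the lower constant. The bound $W\le(2-\delta^2)^s/\delta$ is approached in the limit $q=\delta\rho$, $\rho\to\infty$, where it is not attained. To show the constant cannot be improved, I would use the surjectivity in \cref{thm:exact_transformed_range}. Choose $F=\mathbf 1_{E_n}$, symmetrized to satisfy the Hermitian condition, where
\[
E_n=\{(\mathbf k',\rho):\delta\rho\le q\le\delta(1+1/n)\rho,\ n\le\rho\le n+1\}.
\]
This $F$ lies in $Y^s$. Setting $g_n:=\mathcal M\mathbf v_F$, \cref{thm:exact_transformed_range} gives $\Theta_{\gamma_n}=F$, and $\mathbf v_F\in H_e^s$. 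On $E_n$ the weight ratio $W$ tends uniformly to $(2-\delta^2)^s/\delta$ as $n\to\infty$. Therefore $\|\mathcal I_\delta g_n\|_{H^s}^2/N_{\delta,s}(\gamma_n)^2$ tends to the lower constant $\delta/(2-\delta^2)^s$, which proves sharpness.
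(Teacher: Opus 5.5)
Your proof is correct and follows the paper's argument essentially step for step: the same change of variables $\rho=|\mathbf k|$ reduces both quantities to integrals over $\Lambda_\delta$, the pointwise weight comparison uses $\delta\le q/\rho\le1$ and $|\mathbf k'|^2\le(1-\delta^2)\rho^2$, and optimality comes from the surjectivity in \cref{thm:exact_transformed_range} applied to data concentrated near $q=\delta\rho$ with $\rho\to\infty$. The only difference is that you use indicators $\mathbf 1_{E_n}$ instead of the paper's smooth bumps $F_n\in C_c^\infty(\Lambda)$. This is equally valid, since $Y^s$ only requires measurability and a finite norm, and $E_n$ is already invariant under $\mathbf k'\mapsto-\mathbf k'$, so no symmetrization is needed.
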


\begin{proof}
By \cref{thm:canonical_reconstruction}, $\mathcal Ig\in H^s$, so its
Fourier truncation also belongs to $H^s$.
Since $q(\mathbf k',|\mathbf k|)=|k_3|$, the indicators
$\mathbf1_{\Lambda_\delta}(\mathbf k',|\mathbf k|)$ and
$\mathbf1_{\{|k_3|\ge\delta|\mathbf k|\}}(\mathbf k)$ agree almost everywhere.
The same change of variables as in \cref{prop:weighted_energy} gives
\begin{equation*}
    \|\mathcal I_\delta g\|_{H^s}^2
    =8\int_{\Lambda_\delta}(1+\rho^2)^s\frac q\rho
      |\Theta_\gamma(\mathbf k',\rho)|^2\,d\mathbf k'\,d\rho.
\end{equation*}
On $\Lambda_\delta$, $\delta\le q/\rho\le1$ and
$1\le(1+|\mathbf k'|^2+\rho^2)/(1+\rho^2)\le2-\delta^2$.
Comparing the weights proves \eqref{eq:cone_interior_stability} and
finiteness of $N_{\delta,s}(\gamma)$.

To prove optimality, choose nonzero real-valued
$F_n\in C_c^\infty(\Lambda)$, even in $\mathbf k'$, with
\begin{equation*}
    \operatorname{supp}F_n\subset
    \left\{(\mathbf k',\rho)\in\Lambda:
      n<\rho<n+1,\quad \delta<\frac q\rho<\delta+\frac1n\right\}
\end{equation*}
for all sufficiently large $n$. By \cref{thm:exact_transformed_range},
there exist $g_n\in X^s$ with $\Theta_{\gamma_n}=F_n$.
Since
$|\mathbf k'|^2/\rho^2=1-(q/\rho)^2$,
the ratio
\begin{equation*}
    \frac{(1+\rho^2)^s q}{\rho(1+|\mathbf k'|^2+\rho^2)^s}
\end{equation*}
converges uniformly on $\operatorname{supp}F_n$ to
$\delta/(2-\delta^2)^s$. The ratio of the two energy integrals is a
weighted average of this ratio, so
\begin{equation*}
    \lim_{n\to\infty}
    \frac{\|\mathcal I_\delta g_n\|_{H^s}^2}
         {N_{\delta,s}(\gamma_n)^2}
    =\frac{\delta}{(2-\delta^2)^s}.
\end{equation*}
This proves optimality.
\end{proof}

Both regularity regimes occur within the model class. The test fields in \cref{subsec:analytic_validation} provide cancelling and noncancelling examples. For the mollified surface fields of \cref{rem:finite_resolution_surface}, the compactly supported function $\bar\phi_\varepsilon$ is nonzero and hence cannot be harmonic; therefore $\bar h=-\Delta'\bar\phi_\varepsilon\not\equiv0$. They nevertheless have transformed data in every $Y^s$ and are reconstructed exactly.

Moreover, \cref{app:near_tangential_family} shows that even within the depth-cancelling Schwartz class there is no uniform reverse estimate of the standard Sobolev data norm by the reconstruction norm. Thus depth cancellation characterizes finiteness of the standard data norm, but not its uniform equivalence with the reconstruction norm.

\section{Partial-data uniqueness}
\label{sec:partial_data_uniqueness}

The scalar reduction also gives a local uniqueness result for the divergence from partial data. For an open measurement region $\Gamma\subset\mathbb R^2$ and
$R_{\max}>0$, define
\begin{equation*}
    \Omega_{\Gamma,R_{\max}}
    :=
    \bigcup_{\mathbf x'\in\Gamma}
    B(\mathbf x',R_{\max}).
\end{equation*}

\begin{theorem}
\label{thm:local_uniqueness}
Let $\Gamma\subset\mathbb R^2$ be open, let $R_{\max}>0$, and let
$\mathbf u_1,\mathbf u_2\in H_e^1$. Then
\begin{equation*}
    \begin{aligned}
    &\mathcal M\mathbf u_1=\mathcal M\mathbf u_2
    \quad\text{a.e.\ on }\Gamma\times(0,R_{\max})\\
    &\quad\Longleftrightarrow\quad
    \nabla\!\cdot(\mathbf u_1-\mathbf u_2)=0
    \quad\text{a.e.\ on }\Omega_{\Gamma,R_{\max}}.
    \end{aligned}
\end{equation*}
\end{theorem}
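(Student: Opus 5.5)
By linearity, set $\mathbf w:=\mathbf u_1-\mathbf u_2\in H_e^1$ and $h:=-\nabla\!\cdot\mathbf w\in L^2(\mathbb R^3)$. Since $\mathbf w$ is mirror symmetric, $h$ is even in $x_3$. By \eqref{eq:flux_ball_representation}, the statement reduces to showing that
\[
\int_{B(\mathbf x',r)}h=0 \text{ for a.e. }(\mathbf x',r)\in\Gamma\times(0,R_{\max})
\iff
h=0 \text{ a.e. on }\Omega_{\Gamma,R_{\max}}.
\]

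The direction ``$\Leftarrow$'' is immediate. If $\mathbf x'\in\Gamma$ and $0<r<R_{\max}$, then $B(\mathbf x',r)\subset B(\mathbf x',R_{\max})\subset\Omega_{\Gamma,R_{\max}}$. So $h$ vanishes a.e. on the ball, and $\mathcal M\mathbf w(\mathbf x',r)=0$ for every such pair, not merely a.e.

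For ``$\Rightarrow$'', I first pass from ball integrals to spherical means. By \cref{lem:spherical_mean_reduction}, the function $r\mapsto r^4\mathcal M\mathbf w(\mathbf x',r)$ is absolutely continuous in $r$ for a.e. $\mathbf x'$. Its weak derivative is $4\pi r^2\mathcal Rh(\mathbf x',r)$. If it vanishes a.e. on $\Gamma\times(0,R_{\max})$, then by Fubini it vanishes for a.e. $\mathbf x'\in\Gamma$ and a.e. $r\in(0,R_{\max})$. Hence $\mathcal Rh=0$ a.e. on $\Gamma\times(0,R_{\max})$.

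Next I reduce to smooth functions by mollifying in the wall directions only. Take $\psi_\varepsilon\in C_c^\infty(\mathbb R^2)$ and set $h_\varepsilon:=h*_{\mathbf x'}\psi_\varepsilon$, a convolution in $\mathbf x'$. This commutes with $\mathcal R$, preserves evenness in $x_3$, and gives $\mathcal Rh_\varepsilon=0$ on $\Gamma_\varepsilon\times(0,R_{\max})$, where $\Gamma_\varepsilon:=\{\mathbf x'\in\Gamma:\operatorname{dist}(\mathbf x',\partial\Gamma)>\varepsilon\}$. To get enough smoothness in $x_3$ as well, I would use a full radial mollifier $\eta_\varepsilon$ on $\mathbb R^3$. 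Spherical means with centers on $\Pi$ commute with convolution only by radial kernels through a ball-shifting identity, which is awkward. Instead I work with the equivalent statement that the ball integrals $\int_{B(\mathbf y,r)}h$ vanish for centers $\mathbf y$ in $\Pi\cap(\Gamma_\varepsilon\times\{0\})$. Convolving $h$ with a radial $\eta_\varepsilon$ shifts centers off the plane, so I restrict to mollification in $\mathbf x'$ alone, which suffices for a distributional form of Andersson's theorem.

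The core step is to invoke Andersson's local uniqueness theorem \cite{Andersson1988SphericalAverages}. For $f$ even in $x_3$, vanishing of the plane-centered spherical means over an open set of centers $\Gamma$ and all radii $r<R$ forces $f=0$ on $\bigcup_{\mathbf x'\in\Gamma}B(\mathbf x',R)$. Andersson states this for distributions, or locally integrable functions, of suitable growth, and $h\in L^2$ qualifies. The key point is that the hypothesis is local: only the values of $h$ in $\Omega_{\Gamma,R_{\max}}$ enter the data on $\Gamma\times(0,R_{\max})$. The intuition is a support-theorem or unique-continuation argument for the wave equation in \cref{rem:wave_spectral_support}. There $w$ with $w_r(\cdot,0)=h$ has odd wall trace $\gamma^o$, and that trace vanishes on $\Gamma\times(-R_{\max},R_{\max})$. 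Holmgren--John unique continuation across the timelike surface $\Pi$, combined with the evenness of $w$ in $x_3$ (so that its normal derivative also vanishes on $\Pi$), then gives $w=0$ in the domain of dependence. That domain is the double cone $\{(\mathbf x,r):\operatorname{dist}(\mathbf x,\Gamma\times\{0\})<R_{\max}-|r|\}$, and differentiating at $r=0$ yields $h=0$ on $\Omega_{\Gamma,R_{\max}}$. Finally, letting $\varepsilon\to0$ with $\Gamma_\varepsilon\uparrow\Gamma$ gives the conclusion a.e. on the full union.

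The main obstacle is matching the hypotheses of Andersson's theorem, or of the Holmgren--John argument, to the low regularity of $h$ and to the merely a.e. vanishing of the data. Holmgren--John requires distributional solutions, and Cauchy data must be checked on $\Pi$ using the evenness. I would handle this by working with $w$ as a tempered distributional solution and applying the distributional version of the John/Tataru unique continuation across the noncharacteristic hyperplane $\Pi$. Alternatively, I would cite Andersson directly for $L^1_{\mathrm{loc}}$ functions after the wall-direction mollification.
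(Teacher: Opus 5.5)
Your outline follows the paper's route. You reduce to $h=-\nabla\!\cdot(\mathbf u_1-\mathbf u_2)$, pass from ball integrals to spherical means via \cref{lem:spherical_mean_reduction}, cite Andersson's local uniqueness theorem, and get the converse from the ball formula. The one step that is not carried out is the one you flag as the main obstacle.

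\textbf{The unresolved step.} Andersson's theorem concerns his transform of an $x_3$-even tempered distribution. That transform is a distribution in $(\mathbf x',\mathbf z)\in\mathbb R^2\times\mathbb R^3$, with the radius lifted to $|\mathbf z|$. Your hypothesis gives only that the pointwise means $\mathcal Rh(\mathbf x',r)$ vanish for a.e.\ $r\in(0,R_{\max})$. Before citing the theorem you must show two things:
\begin{itemize}
\item Andersson's transform of your $L^2$ function $h$ is represented by the function $\mathcal Rh(\mathbf x',|\mathbf z|)$.
\item It has no distributional contribution at the radial origin $\mathbf z=0$, which a hypothesis on $r>0$ cannot see.
\end{itemize}
Your claim that ``$h\in L^2$ qualifies'' asserts this identification rather than proving it. Mollifying in $\mathbf x'$ does not help: $h*_{\mathbf x'}\psi_\varepsilon$ is still only $L^2$ in $x_3$, so the identification question is unchanged. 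The mollification is harmless but unnecessary.

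\textbf{How the paper closes it.} This is the content of \cref{lem:radial_lift}.
\begin{itemize}
\item The slice bound \eqref{eq:preprocessed_slice_bound} shows that $G(\mathbf x',\mathbf z)=\mathcal Rh(\mathbf x',|\mathbf z|)$ lies in $L^2(\mathbb R^2\times B_{\mathbb R^3}(0,R))$, with norm controlled by $\|h\|_{L^2}$.
\item Approximate $h$ in $L^2$ by $x_3$-even Schwartz functions $h_j$, for which the two transforms agree. Then $G_j\to G$ in $L^2_{\mathrm{loc}}$.
\item Continuity of Andersson's transform on $\mathcal S'$ gives convergence of the transforms, so the transform of $h$ is $G$.
\item Since $G$ is locally integrable, its a.e.\ vanishing on $V\times\{0<|\mathbf z|<R\}$ means the transform vanishes on all of $V\times B_{\mathbb R^3}(0,R)$.
\end{itemize}
Andersson's theorem then applies directly to $h\in L^2$, with no smoothing, and a covering by balls $B(\mathbf x_0',R)$ finishes.

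\textbf{Your wave-equation alternative.} The Holmgren--John/Tataru argument is a genuinely different route: it would reprove Andersson's local result rather than cite it. As written it is a sketch and considerably heavier. You would need to:
\begin{itemize}
\item identify the wall trace of the finite-energy solution $w$ with $\gamma^o$;
\item give meaning to ``$\partial_{x_3}w=0$ on $\Pi$'', which has no classical trace when $h$ is only $L^2$;
\item invoke a global unique-continuation theorem, since local Holmgren gives $w=0$ only near the wall piece, not on the whole double cone.
\end{itemize}
Citing Andersson through the radial-lift identification avoids all of this.
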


\begin{proof}
Let $\mathbf w:=\mathbf u_1-\mathbf u_2$ and
$\delta h:=-\nabla\!\cdot\mathbf w$.
Then $\delta h\in L^2(\mathbb R^3)$ and is even in $x_3$.
Suppose first that the data agree. Fix $\mathbf x_0'\in\Gamma$ and
$0<R<R_{\max}$, and choose $\varepsilon>0$ such that
\[
    B_{\mathbb R^2}(\mathbf x_0',\varepsilon)\subset\Gamma.
\]
On
$B_{\mathbb R^2}(\mathbf x_0',\varepsilon)\times(0,R)$
we have $\mathcal M\mathbf w=0$. Hence
\cref{lem:spherical_mean_reduction} gives
\begin{equation*}
    \mathcal R\delta h=0
    \qquad\text{a.e. on }
    B_{\mathbb R^2}(\mathbf x_0',\varepsilon)\times(0,R).
\end{equation*}
By \cref{lem:radial_lift}, the corresponding spherical-mean transform
vanishes on
\[
    B_{\mathbb R^2}(\mathbf x_0',\varepsilon)
    \times B_{\mathbb R^3}(0,R).
\]
The local uniqueness theorem
\cite{Andersson1988SphericalAverages}
therefore gives
\[
    \delta h=0
    \qquad\text{a.e.\ in }B(\mathbf x_0',R).
\]
These balls cover $\Omega_{\Gamma,R_{\max}}$; a countable subcover gives
$\delta h=0$ almost everywhere on that region.

Conversely, if $\delta h=0$ almost everywhere on
$\Omega_{\Gamma,R_{\max}}$, every wall-centered ball $B(\mathbf x',r)$ with $\mathbf x'\in\Gamma$ and radius
less than $R_{\max}$ lies in that region. The ball formula
\eqref{eq:flux_ball_representation} then gives $\mathcal M\mathbf w=0$ on
the observation set.
\end{proof}

Taking $\Gamma=\mathbb R^2$ and $R_{\max}=m$, and then letting
$m\to\infty$, recovers the full-data ambiguity characterized in
\cref{cor:full_data_identifiability}.

\section{Algorithm and experiments}
\label{sec:algorithm_results}

We first describe the discrete Fourier reconstruction. Tests on analytic
fields assess its accuracy and invariance under divergence-free
perturbations, using independently generated data. Three imaging examples
then examine the reconstructed potential and field for model-generated,
rendered, and measured transients.

\subsection{Algorithm}
\label{subsec:proposed_algorithm}
The input consists of samples of $g(\mathbf x',r)$. We form $\gamma$ by
\eqref{eq:preprocessing_operator} and take the three-dimensional
FFT of its odd radial extension. Internal zero padding refines the radial
frequency grid used for Stolt interpolation. Denote the resulting discrete approximation of $\widehat{\gamma^o}$ by $G_d(\mathbf k',\rho_\ell)$.
At each admissible frequency $\rho_\ell>|\mathbf k'|$, set
\begin{equation}
 q_\ell=\sqrt{\rho_\ell^2-|\mathbf k'|^2},\qquad
 \widehat h_{\rm rec}(\mathbf k',k_3)
 =\operatorname{interp}_{q_\ell\to|k_3|}
 \bigl[iq_\ell G_d(\mathbf k',\rho_\ell)\bigr].
 \label{eq:discrete_scalar_reconstruction}
\end{equation}
Here $\widehat h_{\rm rec}$ is the discrete counterpart of
$\widehat{h_g}$ in \eqref{eq:scalar_synthesis_spectrum}.
The cone factor is applied before interpolation, as prescribed by
\eqref{eq:cone_factor_identity}. We use a local even quadratic fit near $q=0$, corresponding to $k_3=0$ after Stolt interpolation. The potential and field follow from
\begin{equation}
 \widehat\phi_{\rm rec}(\mathbf k)=\frac{\widehat h_{\rm rec}(\mathbf k)}{|\mathbf k|^2},
 \qquad \widehat{\mathbf u}_{\rm rec}(\mathbf k)=i\mathbf k\widehat\phi_{\rm rec}(\mathbf k)
 \quad(\mathbf k\ne0),
 \label{eq:discrete_field_reconstruction}
\end{equation}
with the frequency origin set to zero, followed by inverse FFTs.
The two multipliers implement the field synthesis in
\eqref{eq:vector_synthesis_spectrum}.
The FFTs dominate the $O(N^3\log N)$ time and
$O(N^3)$ memory costs on an $N^3$ grid \cite{Lindell2019FK}.

\subsection{Validation on test fields}
\label{subsec:analytic_validation}
We test two smooth compactly supported gradient fields, with and without
depth cancellation, using the same sampling and reconstruction parameters.
Let
\begin{equation*}
 \begin{aligned}
 \eta(s)&=
 \begin{cases}
 \exp\!\left(1-(1-s)^{-1}-8s\right),&0\le s<1,\\
 0,&s\ge1,
 \end{cases}\\[3pt]
 s_j(\mathbf x)&=\frac{(x_1-x_0)^2+(x_2-y_0)^2}{a_j^2}
                  +\frac{(x_3-z_j)^2}{b_j^2},
 \end{aligned}
\end{equation*}
where $(x_0,y_0)=(0.04,-0.03)\,\mathrm m$ and $j\in\{\mathrm c,\mathrm{nc}\}$.
Define the positive-depth potentials by
\begin{equation*}
 \phi_{\mathrm c}^+(\mathbf x)=\frac{x_3-z_{\mathrm c}}{b_{\mathrm c}}
      \eta(s_{\mathrm c}(\mathbf x)),
 \qquad
 \phi_{\mathrm{nc}}^+(\mathbf x)=\eta(s_{\mathrm{nc}}(\mathbf x)),
\end{equation*}
with $(a_{\mathrm c},b_{\mathrm c},z_{\mathrm c})=(0.24,0.24,0.36)\,\mathrm m$
and $(a_{\mathrm{nc}},b_{\mathrm{nc}},z_{\mathrm{nc}})=(0.36,0.12,0.18)\,\mathrm m$.
For each case, set
$\phi_j(\mathbf x',x_3)=\phi_j^+(\mathbf x',x_3)+\phi_j^+(\mathbf x',-x_3)$
and $\mathbf u_j=\nabla\phi_j$.
The sum is even in $x_3$, hence $\mathbf u_j\in H_e^1$.
The positive-depth supports are a ball and an oblate ellipsoid, respectively,
both separated from the wall. Only the first field satisfies the
depth-cancellation condition of \cref{prop:sobolev_cancellation_characterization}.

We compute the data directly from the directional surface integral
\eqref{eq:directional_forward_surface}, independently of the divergence
reduction, using $256$ Gauss--Legendre nodes per integration coordinate.
The $12\,\mathrm m\times12\,\mathrm m$ wall is sampled at $512^2$ points,
with $512$ radial bins at $\Delta r=0.02\,\mathrm m$.
Both reconstructions use twofold internal radial zero padding and a
four-point even quadratic cone-edge fit. No field smoothing or support
clipping is applied, and the normalization is fixed by the Fourier
convention without fitting a scale to the reference field.

The relative vector errors
$\|\mathbf u_{\rm rec}-\mathbf u\|_{\ell^2}/\|\mathbf u\|_{\ell^2}$,
evaluated over all three components on the entire returned $512^3$ grid,
are $1.89\%$ and $6.23\%$ for the cancelling and noncancelling cases,
respectively. Since the fields differ in geometry and depth, these errors
do not isolate the effect of depth cancellation.

\begin{figure}[!htbp]
 \centering
 \includegraphics[width=0.94\textwidth]{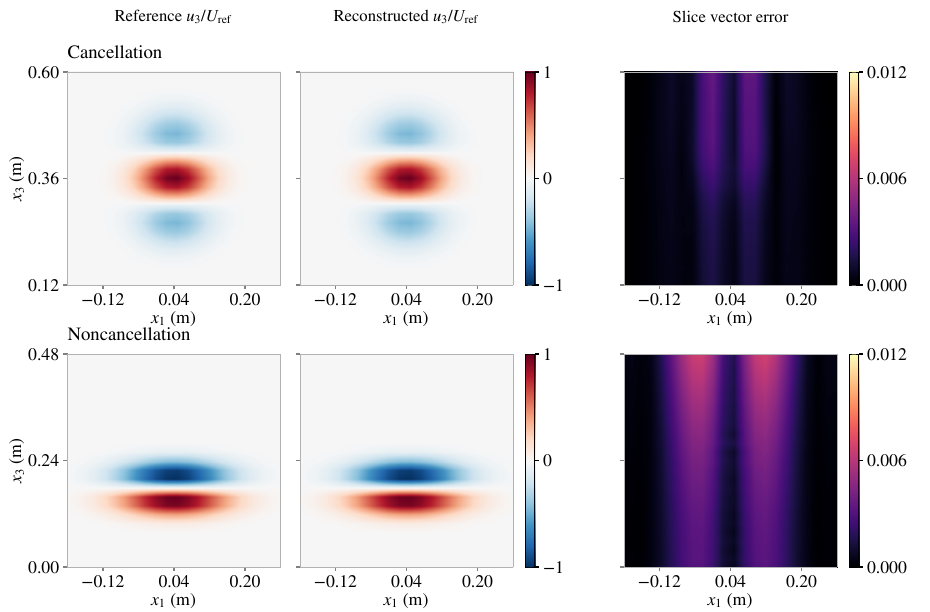}
 \caption{Reconstruction of smooth compactly supported test fields with depth cancellation (top) and
 without it (bottom), on the central slice $x_2=-0.03\,\mathrm m$.
 Columns show the reference $u_3/U_{\rm ref}$, reconstructed
 $u_{{\rm rec},3}/U_{\rm ref}$, and the vector error
 $|\mathbf u_{\rm rec}-\mathbf u|/U_{\rm ref}$ on the same slice, where
 $U_{\rm ref}$ is the maximum reference vector magnitude over the full grid
 for each case. The component and error color scales are shared across
 rows. }
 \label{fig:analytic_reconstruction}
\end{figure}

\Cref{fig:analytic_reconstruction} shows the distinct depth parity of the
axial component: it is even about the target center in the cancelling case
and odd in the noncancelling case.

For the gauge test, use the cancelling potential $\phi_{\mathrm c}$ to define
$\mathbf v=(\partial_2\phi_{\mathrm c},-\partial_1\phi_{\mathrm c},0)$,
which belongs to $H_e^1$ and is divergence free. We separately evaluate the
surface data of $\mathbf u_{\mathrm c}$ and $\mathbf u_{\mathrm c}+\mathbf v$
on a $128^2$ wall grid covering $3\,\mathrm m\times3\,\mathrm m$, with $128$
radial bins at $\Delta r=0.02\,\mathrm m$, using $192$ Gauss--Legendre nodes per integration coordinate and the same
zero-padding and cone-edge interpolation settings as above.
The relative discrete $\ell^2$ differences are $4.17\times10^{-8}$ for the
data and $2.88\times10^{-6}$ for the full reconstructed vector field,
each measured against its unperturbed array.

\subsection{Three imaging examples}
\label{subsec:imaging_examples}
We apply the reconstruction to a model-generated Letter-T transient,
rendered Stanford Bunny transients, and measured Stanford statue data.
The rendered and measured data undergo background correction. The radial derivative in
\eqref{eq:preprocessing_operator} is evaluated using
Savitzky--Golay differentiation, and Gaussian smoothing is applied during
Fourier synthesis. These operations are used to stabilize the reconstruction for
finite-window and noisy data and are not part of the continuous inversion
formula.
The rendered and measured transients include transport effects not represented
by \eqref{eq:directional_forward_surface}; the displayed vector components
are therefore not interpreted as surface-normal estimates.

\paragraph{Model-generated data}
The Letter-T example uses a model-generated transient and the corresponding
reference potential on a $64\times64$ wall grid covering
$0.6\,\mathrm m\times0.6\,\mathrm m$.
The data contain $512$ radial bins with $\Delta r=0.00125\,\mathrm m$,
and the target is centered at a depth of approximately $0.32\,\mathrm m$.
In \cref{fig:letter_t_reconstruction}, the reconstructed potential projection reproduces
the horizontal bar and vertical stem of the reference, with slight blurring
at the edges.

\paragraph{Rendered data}
The Stanford Bunny transients are from the synthetic Zaragoza NLOS data set,
rendered as described in \cite{jarabo2014}. We retain the first $512$ time bins
and use $256\times256$ wall samples over a $2\,\mathrm m\times2\,\mathrm m$
region, with $\Delta r=0.003\,\mathrm m$; the object is approximately
$0.5\,\mathrm m$ from the wall.
In \cref{fig:bunny_reconstruction}, the potential and field-magnitude images
retain the ears and body outline, with interruptions near the head and
an uneven response across the body.

\begin{figure}[!tbp]
 \setlength{\abovecaptionskip}{4pt}
 \centering
  \includegraphics[width=0.74\textwidth]{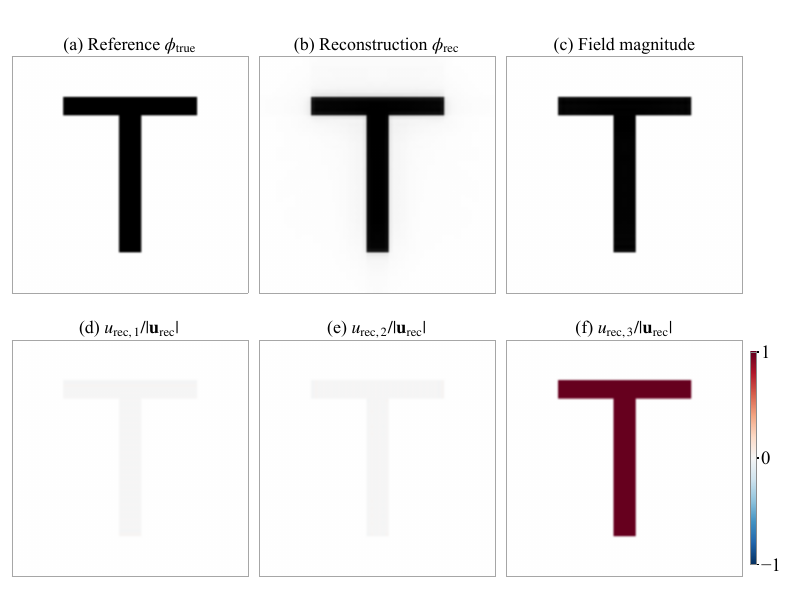}
 \caption{Letter-T: (a) reference potential, (b) reconstructed potential,
 (c) field magnitude, and (d)--(f) normalized field components.
 Panel (b) projects $\max(\phi_{\rm rec},0)$ by taking the depthwise maximum.
 Panels (c)--(f) select the depth maximizing $|\mathbf u_{\rm rec}|$ with
 $u_{{\rm rec},3}>0$. These conventions also apply to
 \cref{fig:bunny_reconstruction,fig:statue_reconstruction}.}
  \label{fig:letter_t_reconstruction}
\end{figure}

\paragraph{Measured data}
The $180$-min Stanford statue acquisition \cite{Lindell2019FK} uses
$512$ retained, aligned time bins and $512\times512$ wall measurements
over $2\,\mathrm m\times2\,\mathrm m$. The $32$ ps bin spacing gives
$\Delta r=0.0048\,\mathrm m$.
In \cref{fig:statue_reconstruction}, the extended arm, bent torso, and base are visible.
The field-magnitude image is more localized than
the potential, which has a diffuse halo; fine surface detail remains unresolved.

\begin{figure}[!tbp]
 \setlength{\abovecaptionskip}{4pt}
 \centering
 \includegraphics[width=0.74\textwidth]{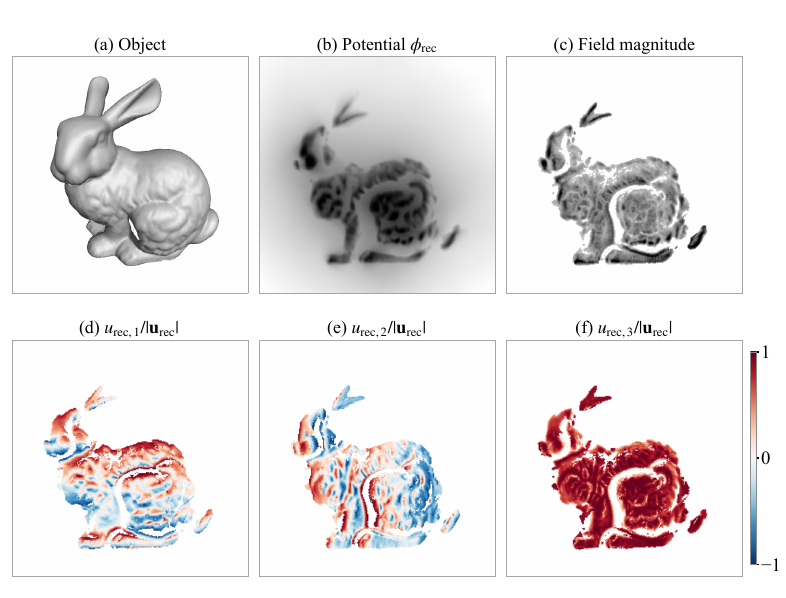}
 \caption{Bunny: (a) reference scene, (b) reconstructed potential,
 (c) field magnitude, and (d)--(f) normalized field components,
 using the display conventions in \cref{fig:letter_t_reconstruction}.}
 \label{fig:bunny_reconstruction}
\end{figure}

\section{Conclusion}
\label{sec:conclusion}

The directional confocal model considered here reduces the recovery of a vector field to a scalar spherical-mean problem for its divergence. This identifies the irrotational Helmholtz component as the natural data-consistent reconstruction and accounts for the full-data gauge ambiguity. The transformed range further shows that exact reconstruction does not require standard Sobolev regularity of the preprocessed data; for Schwartz fields, depth cancellation characterizes precisely when that additional regularity is present. Partial measurements retain the same scalar structure and determine the divergence in the corresponding observable region.

The numerical experiments verify the Fourier implementation on test fields and illustrate the reconstruction on model-generated, rendered, and measured data. The present analysis does not quantify the effects of finite apertures, radial truncation, or raw-data noise. These questions, together with reconstruction for nonconfocal acquisition, provide natural directions for further work.

\begin{figure}[!tbp]
 \centering
 \includegraphics[width=0.74\textwidth]{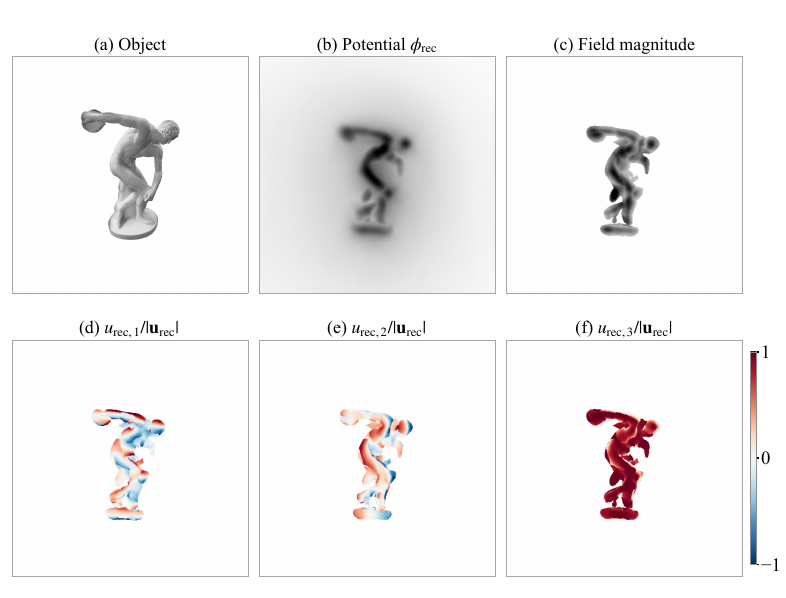}
 \caption{Statue, $180$ min: (a) reference scene, (b) reconstructed potential,
 (c) field magnitude, and (d)--(f) normalized field components,
 using the display conventions in \cref{fig:letter_t_reconstruction}.}
 \label{fig:statue_reconstruction}
\end{figure}

\appendix

\section{Proofs for the forward operator and Fourier--sine representation}
\label{app:forward_fourier_sine_proofs}

\begin{proof}[Proof of \cref{lem:forward_operator_bound}]
Let $\mathbf u\in H_e^1$ and $h:=-\nabla\!\cdot\mathbf u$.
The Cauchy--Schwarz inequality gives
\begin{equation*}
    |(\mathcal M\mathbf u)(\mathbf x',r)|^2
    \le
    C r^{-5}
    \int_{B(\mathbf x',r)}|h(\mathbf y)|^2\,d\mathbf y.
\end{equation*}
For fixed
$\mathbf y=(\mathbf y',y_3)$, the set of wall centers
$\mathbf x'$ for which $\mathbf y\in B(\mathbf x',r)$ has area at most
$\pi r^2$. Fubini's theorem gives \eqref{eq:forward_slice_bound}; integrating in
$r$ yields
\begin{equation*}
    \int_a^b\int_{\mathbb R^2}
    |(\mathcal M\mathbf u)(\mathbf x',r)|^2\,d\mathbf x'\,dr
    \le C\int_a^b r^{-3}\,dr\,\|h\|_{L^2(\mathbb R^3)}^2,
\end{equation*}
which proves \eqref{eq:forward_local_bound}. If $r<d$, every ball centered on the
relay plane lies in the slab $|x_3|<d$, so the data vanish by the ball formula.
Integrating \eqref{eq:forward_slice_bound} over $(d,\infty)$ and using
$\int_d^\infty r^{-3}\,dr=(2d^2)^{-1}$ gives the global bound.
\end{proof}

We also record the inverse of the preprocessing map used in
\cref{subsec:reconstruction_map}.
For $\mathbf u\in H_e^1$, set $g=\mathcal M\mathbf u$ and
$h=-\nabla\!\cdot\mathbf u$. The estimate
$|r^4g(\mathbf x',r)|\le Cr^{3/2}\|h\|_2$ from
\eqref{eq:flux_ball_representation} and the identity
$\partial_r(r^4g)=4\pi r\gamma$ give
\begin{equation}
    \lim_{r\downarrow0}r^4g(\mathbf x',r)=0,
    \qquad
    g(\mathbf x',r)=\frac{4\pi}{r^4}\int_0^r t\gamma(\mathbf x',t)\,dt.
    \label{eq:preprocessing_inverse}
\end{equation}

\begin{proof}[Proof of \cref{prop:fourier_sine_representation}]
We prove \eqref{eq:full_odd_cone_representation} for real
$h\in L^2(\mathbb R^3)$ that are even in $x_3$, with
$\gamma=r\mathcal Rh$ as in \cref{lem:spherical_mean_reduction}.
Parameterizing the sphere by
$\mathbf y=(\mathbf x',0)+r\boldsymbol\omega$ and applying Jensen's inequality
gives, for almost every $r>0$,
\begin{align}
    \|\gamma(\cdot,r)\|_{L^2(\mathbb R^2)}^2
    &\le
    \frac{r^2}{4\pi}
    \int_{\mathbb R^2}\int_{\mathbb S^2}
    |h((\mathbf x',0)+r\boldsymbol\omega)|^2
    \,d\boldsymbol\omega\,d\mathbf x' \notag\\
    &=
    \frac r2
    \int_{-r}^{r}\int_{\mathbb R^2}
    |h(\mathbf y',y_3)|^2\,d\mathbf y'\,dy_3
    \le
    \frac r2\|h\|_{L^2(\mathbb R^3)}^2.
    \label{eq:preprocessed_slice_bound}
\end{align}
Integrating in $r$ gives, for every $R>0$,
\begin{equation*}
    \|\mathbf1_{\{|r|<R\}}\gamma^o\|_{L^2(\mathbb R^2\times\mathbb R)}^2
    \le\frac{R^2}{2}\|h\|_{L^2(\mathbb R^3)}^2.
\end{equation*}
Thus $\gamma^o$ has polynomial $L^2$ growth in the radial variable and
defines a tempered distribution. The same estimate shows that
$h\mapsto\gamma^o$ is continuous from $L^2(\mathbb R^3)$ to
$L^2(\mathbb R^2\times(-R,R))$ for every $R>0$.

For even $h\in\mathcal S(\mathbb R^3)$, Fourier inversion and
\begin{equation*}
    \frac{1}{4\pi}\int_{\mathbb S^2}
    e^{ir\mathbf k\cdot\boldsymbol\omega}\,d\boldsymbol\omega
    =\frac{\sin(r|\mathbf k|)}{r|\mathbf k|}
\end{equation*}
give, after Fourier transformation in $\mathbf x'$,
\begin{equation}
    \mathcal F_{\mathbf x'}\gamma^o(\mathbf k',r)
    =
    \frac{1}{2\pi}\int_{\mathbb R}
    \widehat h(\mathbf k',k_3)
    \frac{\sin(r|\mathbf k|)}{|\mathbf k|}\,dk_3.
    \label{eq:spherical_mean_partial_fourier}
\end{equation}
The formula holds for all real $r$ by oddness. For
$\psi\in\mathcal S(\mathbb R^2\times\mathbb R)$, with
$\langle\cdot,\cdot\rangle$ denoting bilinear duality, the identity
\begin{equation*}
    \mathcal F_r\bigl(\sin(r\lambda)\bigr)(\rho)
    =\frac{\pi}{i}
      \bigl(\delta(\rho-\lambda)-\delta(\rho+\lambda)\bigr),
    \qquad \lambda>0,
\end{equation*}
therefore yields
\begin{equation}
    \langle\widehat{\gamma^o},\psi\rangle
    =
    \frac{1}{2i}\int_{\mathbb R^3}
    \frac{\widehat h(\mathbf k)}{|\mathbf k|}
    \bigl(\psi(\mathbf k',|\mathbf k|)-\psi(\mathbf k',-|\mathbf k|)\bigr)
    \,d\mathbf k.
    \label{eq:odd_spectrum_pairing}
\end{equation}
The interchanges are justified by Schwartz decay and local integrability
of $|\mathbf k|^{-1}$.

To pass to even $h\in L^2$, choose real-valued, $x_3$-even
$h_j\in\mathcal S(\mathbb R^3)$ with
$h_j\to h$ in $L^2$, and set $\gamma_j=r\mathcal Rh_j$.
The function
\begin{equation*}
    m_\psi(\mathbf k)
    :=
    \frac{\psi(\mathbf k',|\mathbf k|)-\psi(\mathbf k',-|\mathbf k|)}{|\mathbf k|}
\end{equation*}
is bounded near $\mathbf k=0$ by the mean value formula and rapidly
decreasing at infinity. In particular, $m_\psi\in L^2(\mathbb R^3)$.
Plancherel and Cauchy--Schwarz imply convergence of the right-hand side of
\eqref{eq:odd_spectrum_pairing}. Estimate \eqref{eq:preprocessed_slice_bound}
and partial Plancherel also give
\begin{equation*}
    \left|\left\langle
      \widehat{\gamma_j^o}-\widehat{\gamma^o},\psi
    \right\rangle\right|
    \le C\|h_j-h\|_{L^2}
    \int_{\mathbb R}|r|^{1/2}
      \|\mathcal F_\rho\psi(\cdot,r)\|_{L^2(\mathbb R^2)}\,dr
    \longrightarrow0.
\end{equation*}
The integral is finite because $\mathcal F_\rho\psi$ is Schwartz.
It follows that \eqref{eq:odd_spectrum_pairing} holds for the original
$h\in L^2$ and every $\psi\in\mathcal S(\mathbb R^3)$.

Using the evenness of $\widehat h$, restrict to $k_3>0$ and set
$\rho=(|\mathbf k'|^2+k_3^2)^{1/2}$.
Then $q(\mathbf k',\rho)=k_3$ and
$d\rho=(q/\rho)\,dk_3$.
Equation~\eqref{eq:odd_spectrum_pairing} becomes
\begin{equation}
    \langle\widehat{\gamma^o},\psi\rangle
    =
    \frac{1}{i}\int_{\mathbb R^2}\int_{|\mathbf k'|}^\infty
    \frac{\widehat h(\mathbf k',q)}{q}
    \bigl(\psi(\mathbf k',\rho)-\psi(\mathbf k',-\rho)\bigr)
    \,d\rho\,d\mathbf k'.
    \label{eq:cone_density_pairing}
\end{equation}
Define
\begin{equation*}
    G_h(\mathbf k',\rho)
    :=
    \begin{cases}
    \displaystyle
    -i\operatorname{sgn}(\rho)
    \frac{\widehat h(\mathbf k',q(\mathbf k',|\rho|))}
         {q(\mathbf k',|\rho|)},
    & |\rho|>|\mathbf k'|,\\[6pt]
    0,
    & |\rho|\le|\mathbf k'|.
    \end{cases}
\end{equation*}
We show that $G_h$ is locally integrable and represents $\widehat{\gamma^o}$.
To verify local integrability,
let $K\subset\mathbb R^2\times[0,\infty)$ be compact and set
\begin{equation*}
    D_K
    :=\{(\mathbf k',\kappa):\kappa>0,\quad
      (\mathbf k',\sqrt{|\mathbf k'|^2+\kappa^2})\in K\}.
\end{equation*}
Changing variables in the integral gives
\begin{align*}
    \int_{K\cap\Lambda}|G_h(\mathbf k',\rho)|\,d\mathbf k'\,d\rho
    &=
    \int_{D_K}\frac{|\widehat h(\mathbf k',\kappa)|}
                     {\sqrt{|\mathbf k'|^2+\kappa^2}}\,d\mathbf k'\,d\kappa\\
    &\le
    \|\widehat h\|_{L^2(D_K)}
    \left(\int_{D_K}\frac{d\mathbf k'\,d\kappa}
                              {|\mathbf k'|^2+\kappa^2}\right)^{1/2}
    <\infty.
\end{align*}
Since $D_K$ is bounded and
$|\mathbf k|^{-1}\in L^2_{\mathrm{loc}}(\mathbb R^3)$, the density is
locally integrable, including at the vertex; oddness handles negative
$\rho$. Moreover, under the same change of variables,
$\rho=|\mathbf k|$ and $|\mathbf k'|\le\rho$. Hence the Euclidean size of
$(\mathbf k',\rho)$ is bounded above by $\sqrt{2}\,|\mathbf k|$.
Cauchy--Schwarz gives
\begin{equation*}
    \int_{\mathbb R^3}
    \frac{|\widehat h(\mathbf k)|}{|\mathbf k|}
    (1+|\mathbf k|)^{-2}\,d\mathbf k
    <\infty.
\end{equation*}
It follows that $G_h$ has at most polynomial growth and therefore defines
a tempered distribution. Equation~\eqref{eq:cone_density_pairing} gives
\begin{equation*}
    \langle\widehat{\gamma^o},\psi\rangle
    =\int_{\mathbb R^3}G_h(\mathbf k',\rho)\psi(\mathbf k',\rho)
      \,d\mathbf k'\,d\rho,
    \qquad\psi\in\mathcal S.
\end{equation*}
Therefore $\widehat{\gamma^o}$ is represented by $G_h$, which proves \eqref{eq:full_odd_cone_representation}.
\end{proof}


\section{Supporting estimates and constructions}
\label{app:supporting_estimates}

\subsection{A depth-cancelling family near tangential frequencies}
\label{app:near_tangential_family}

We construct a depth-cancelling irrotational Schwartz family for which
the standard Sobolev data norm cannot be bounded uniformly by the
reconstruction norm.

Choose nonzero real functions
$a\in C_c^\infty(\mathbb R^2)$ and
$b\in C_c^\infty((1,2))$, with $a$ even under
$\mathbf k'\mapsto-\mathbf k'$ and supported in
$1<|\mathbf k'|<2$. Regard $b$ as its zero extension to $\mathbb R$.
For $0<\varepsilon<1/2$, define
\begin{align*}
    \widehat h_\varepsilon(\mathbf k',k_3)
    &:=
    a(\mathbf k')
    \left[
        b(k_3/\varepsilon)+b(-k_3/\varepsilon)
    \right],\\
    \widehat{\mathbf u_\varepsilon}(\mathbf k)
    &:=
    \frac{i\mathbf k}{|\mathbf k|^2}
    \widehat h_\varepsilon(\mathbf k),
    \qquad \mathbf k\ne\mathbf0,
\end{align*}
with
$\widehat{\mathbf u_\varepsilon}(\mathbf0)=\mathbf0$.

The scalar spectrum is real and even in both $\mathbf k'$ and $k_3$.
Since its support is separated from the origin,
$\widehat{\mathbf u_\varepsilon}$ is smooth and compactly supported.
Hence
$\mathbf u_\varepsilon\in
H_e^s\cap\mathcal S(\mathbb R^3;\mathbb R^3)$ for every $s\ge0$, with
\[
    -\nabla\!\cdot\mathbf u_\varepsilon=h_\varepsilon,
    \qquad
    \nabla\times\mathbf u_\varepsilon=0.
\]
Moreover,
$\widehat h_\varepsilon(\mathbf k',0)=0$, so
$\bar h_\varepsilon=0$ by
\eqref{eq:depth_cancellation_condition}.

Let
$g_\varepsilon:=\mathcal M\mathbf u_\varepsilon$, and let
$\gamma_\varepsilon$ denote its preprocessed data.
Since $\mathbf u_\varepsilon$ is irrotational,
\cref{thm:canonical_reconstruction} gives
\[
    \mathcal Ig_\varepsilon=\mathbf u_\varepsilon.
\]

On the support of $\widehat h_\varepsilon$,
\[
    1\le|\mathbf k'|\le2,
    \qquad
    \varepsilon\le|k_3|\le2\varepsilon,
    \qquad
    1\le|\mathbf k|\le\sqrt5.
\]
For each fixed $s\ge0$, the ratio of the data weight in
\eqref{eq:data_spectral_norm} to the reconstruction weight in
\eqref{eq:field_spectral_norm} therefore satisfies
\begin{equation*}
    \frac{1}{2\varepsilon}
    \le
    \left(
        \frac{1+|\mathbf k'|^2+|\mathbf k|^2}
             {1+|\mathbf k|^2}
    \right)^s
    \frac{|\mathbf k|}{|k_3|}
    \le
    \frac{2^s\sqrt5}{\varepsilon}.
\end{equation*}
Integrating this comparison and using
\eqref{eq:field_spectral_norm} and
\eqref{eq:data_spectral_norm} gives
\begin{equation}
    \frac{1}{\sqrt{2\varepsilon}}
    \le
    \frac{\|\gamma_\varepsilon^o\|_{H^s}}
         {\|\mathcal Ig_\varepsilon\|_{H^s}}
    \le
    \frac{2^{s/2}5^{1/4}}{\sqrt{\varepsilon}}.
    \label{eq:near_tangential_ratio}
\end{equation}
In particular,
\[
    \frac{\|\gamma_\varepsilon^o\|_{H^s}}
         {\|\mathcal Ig_\varepsilon\|_{H^s}}
    \longrightarrow\infty
    \qquad\text{as }\varepsilon\downarrow0.
\]
Hence no constant $C_s$ can satisfy
\[
    \|\gamma^o\|_{H^s}
    \le
    C_s\|\mathcal Ig\|_{H^s}
\]
throughout the depth-cancelling Schwartz class.
Since the spectral support satisfies
$\varepsilon\le|k_3|\le2\varepsilon$ while
$1\le|\mathbf k|\le\sqrt5$, it approaches the tangential region
$k_3=0$ as $\varepsilon\downarrow0$.

\subsection{Spherical means at the radial origin}
\label{app:radial_origin_interface}

\begin{lemma}
\label{lem:radial_lift}
Let $h\in L^2(\mathbb R^3)$ be even in $x_3$, and define
\[
    G(\mathbf x',\mathbf z)
    :=
    \mathcal Rh(\mathbf x',|\mathbf z|),
    \qquad
    \mathbf z\in\mathbb R^3\setminus\{\mathbf0\}.
\]
Then, for every $R>0$,
\[
    G\in
    L^2\bigl(
        \mathbb R^2\times B_{\mathbb R^3}(0,R)
    \bigr),
\]
and $G$ agrees with the spherical-mean transform used in
\cite{Andersson1988SphericalAverages}.

Consequently, if $\mathcal Rh=0$ almost everywhere on
$V\times(0,R)$ for an open set $V\subset\mathbb R^2$, then this transform
vanishes on $V\times B_{\mathbb R^3}(0,R)$.
\end{lemma}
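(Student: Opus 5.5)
The plan is to use the polar-coordinate structure of $G$ and the slice bound \eqref{eq:preprocessed_slice_bound}, which was already proved in \cref{app:forward_fourier_sine_proofs}. For almost every $r>0$ it gives $\|r\mathcal Rh(\cdot,r)\|_{L^2(\mathbb R^2)}^2\le\tfrac r2\|h\|_{L^2}^2$. Dividing by $r^2$ yields $\|\mathcal Rh(\cdot,r)\|_{L^2(\mathbb R^2)}^2\le (2r)^{-1}\|h\|_{L^2}^2$. Integrating in spherical coordinates in $\mathbf z$ then gives
\begin{equation*}
    \int_{\mathbb R^2}\int_{B_{\mathbb R^3}(0,R)}|G(\mathbf x',\mathbf z)|^2\,d\mathbf z\,d\mathbf x'
    =4\pi\int_0^R r^2\|\mathcal Rh(\cdot,r)\|_{L^2(\mathbb R^2)}^2\,dr
    \le \pi R^2\|h\|_{L^2}^2 .
\end{equation*}
This proves the $L^2$ membership; Tonelli handles measurability, using joint measurability of $(\mathbf x',r)\mapsto\mathcal Rh$ obtained from the coarea representation in \cref{lem:spherical_mean_reduction}. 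The point $\mathbf z=0$ is a null set, so $G$ is a well-defined element of $L^2$. In particular, no information is lost or created at the radial origin.

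Next I identify $G$ with Andersson's transform. In \cite{Andersson1988SphericalAverages} the spherical means are regarded as a function of the center and of a radial variable extended to a full space, so that the data form a distribution (or $L^2_{\mathrm{loc}}$ function) on $\mathbb R^2\times\mathbb R^3$. I would check agreement first for even $h\in\mathcal S$, where both objects are continuous and given pointwise by the same surface average. Then I pass to $L^2$ by density. Take $h_j\to h$ in $L^2$ with $h_j$ even Schwartz. The bound above shows $G_{h_j}\to G_h$ in $L^2(\mathbb R^2\times B(0,R))$, and Andersson's transform is continuous in the same topology (or is defined as that limit). So the two agree for all even $h\in L^2$.

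For the consequence, suppose $\mathcal Rh=0$ a.e.\ on $V\times(0,R)$. Since $G(\mathbf x',\mathbf z)$ depends on $\mathbf z$ only through $|\mathbf z|$, the polar-coordinate Fubini argument shows that $G=0$ a.e.\ on $V\times(B(0,R)\setminus\{0\})$, hence a.e.\ on $V\times B(0,R)$. As an $L^2$ function, and therefore as a distribution, the transform then vanishes on that open set.

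The main obstacle is the identification step. One must match the normalization and the precise function space in which Andersson's transform is defined. In particular, one must ensure that his object carries no additional distributional contribution concentrated at $\mathbf z=0$. The $L^2_{\mathrm{loc}}$ bound is what rules this out: any candidate supported at the origin would have to be a finite sum of derivatives of $\delta_0$ in $\mathbf z$, and no such sum lies in $L^2$.
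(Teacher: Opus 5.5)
Your proposal is correct and follows essentially the same route as the paper: you integrate the slice bound \eqref{eq:preprocessed_slice_bound} in polar coordinates to get $\pi R^2\|h\|_{L^2}^2$, then identify $G$ with Andersson's transform by approximating with $x_3$-even Schwartz functions, using his continuity result together with $L^2_{\mathrm{loc}}$ convergence of the lifts. The one point to tighten is the continuity you invoke: it should be Andersson's continuity on $\mathcal S'$, which $L^2$ convergence implies, with the two limits then compared as distributions, rather than continuity ``in the same topology.''
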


\begin{proof}
By \eqref{eq:preprocessed_slice_bound},
\begin{equation*}
\begin{aligned}
    \int_{\mathbb R^2}\int_{|\mathbf z|<R}
    |G(\mathbf x',\mathbf z)|^2
    \,d\mathbf z\,d\mathbf x'
    &=
    4\pi\int_0^R
    \|r\mathcal Rh(\cdot,r)\|_{L^2(\mathbb R^2)}^2\,dr\\
    &\le
    \pi R^2\|h\|_{L^2(\mathbb R^3)}^2.
\end{aligned}
\end{equation*}
Thus $G$ has the asserted local $L^2$ regularity.

Choose $x_3$-even functions
$h_j\in\mathcal S(\mathbb R^3)$ with
$h_j\to h$ in $L^2(\mathbb R^3)$, and let $G_j$ denote their radial
lifts. The same estimate applied to $h_j-h$ gives
\[
    G_j\to G
    \quad\text{in }
    L^2\bigl(
        \mathbb R^2\times B_{\mathbb R^3}(0,R)
    \bigr)
\]
for every $R>0$.

For Schwartz functions, $G_j$ agrees with the spherical-mean transform
in \cite{Andersson1988SphericalAverages}. Since $h_j\to h$ in
$\mathcal S'$ and this transform is continuous
\cite{Andersson1988SphericalAverages},
the transforms of $h_j$ converge to the transform of $h$.
Since $G_j\to G$ locally in $L^2$, the latter transform is represented
by $G$.

The final assertion follows from the definition of $G$.
\end{proof}

\section*{Acknowledgments}
  The authors thank Xintong Liu and Siqin Zheng for fruitful discussions.
  The authors used ChatGPT to assist with manuscript 
  revision, including mathematical exposition. The authors assume responsibility for all
  content.

\bibliographystyle{siamplain}
\bibliography{references}

\end{document}